
\documentclass{compositio}


%

\usepackage{amssymb,mathrsfs}
\usepackage{amsmath}
\usepackage[normalem]{ulem}
\usepackage{amssymb,amstext,titletoc,fancyhdr,color,xcolor,calc,graphicx}

\theoremstyle{plain}
\newtheorem{theorem}{Theorem}[section]
\newtheorem{lemma}[theorem]{Lemma}

\newtheorem{corollary}[theorem]{Corollary}

\theoremstyle{definition}
\newtheorem{definition}[theorem]{Definition}
\newtheorem{example}[theorem]{Example}

\theoremstyle{remark}
\newtheorem{remark}[theorem]{Remark}

\numberwithin{equation}{section}


\def\Z{{\mathbb Z}}

\newcommand{\beqnn}{\begin{equation}}
\newcommand{\eeqnn}{\end{equation}}
\newcommand{\eb}{\begin{enumerate}}
\newcommand{\ee}{\end{enumerate}}
\newcommand{\bbm}{\begin{bmatrix}}
\newcommand{\ebm}{\end{bmatrix}}
\newcommand{\bpm}{\begin{pmatrix}}
\newcommand{\epm}{\end{pmatrix}}

\newcommand{\ra}{\rightarrow}

\newcommand{\ts}{\textstyle}
\newcommand{\bi}{\begin{itemize}}
\newcommand{\ei}{\end{itemize}}
\newcommand{\beq}{\begin{eqnarray*}}
\newcommand{\eeq}{\end{eqnarray*}}
\def\sn{\mathop{\rm sn}}  
\def\cn{\mathop{\rm cn}}  
\def\dn{\mathop{\rm dn}}


\definecolor{darkgreen}{rgb}{0,0.6,0.2}

\newcommand{\beqq}{\begin{eqnarray}}
\newcommand{\eeqq}{\end{eqnarray}}
\newcommand{\beqn}{\begin{eqnarray}}
\newcommand{\eeqn}{\end{eqnarray}}
\newcommand{\op}{\displaystyle
  \mathop{
    \vphantom{\bigoplus}
    \mathchoice
      {\vcenter{\hbox{\resizebox{\widthof{$\displaystyle\bigoplus$}}{!}{$\boxplus$}}}}
      {\vcenter{\hbox{\resizebox{\widthof{$\bigoplus$}}{!}{$\boxplus$}}}}
      {\vcenter{\hbox{\resizebox{\widthof{$\scriptstyle\oplus$}}{!}{$\boxplus$}}}}
      {\vcenter{\hbox{\resizebox{\widthof{$\scriptscriptstyle\oplus$}}{!}{$\boxplus$}}}}
  }\displaylimits
}

\begin{document}

\title[Solutions of Darboux Equations]{Solutions of Darboux Equations, its Degeneration and
        Painlev\'e VI Equations}

\author{Yik-Man Chiang}
\email{machiang@ust.hk}
\address{Department of Mathematics, Hong Kong University of Science and Technology,
Clear Water Bay, Kowloon, Hong Kong, SAR}
\author{Avery Ching}
\email{maaching@ust.hk}
\address{Department of Mathematics, Hong Kong University of Science and Technology, Clear Water Bay, Kowloon, Hong Kong, SAR}
\author{Chiu-Yin Tsang}
\email{macytsang@ust.hk}
\address{Department of Mathematics, Hong Kong University of Science and Technology, Clear Water Bay, Kowloon, Hong Kong, SAR}

\subjclass{Primary 54C40, 14E20; Secondary 46E25, 20C20}
\classification{33E10 (primary), 34M35 (secondary).}
\keywords{Darboux Equation, Lam\'e Equation, Heun Equation, Hypergeometric Equation.}

\thanks{The first and third authors are partially supported by Hong Kong Research Grant Council \#16300814.}

\begin{abstract}
In this paper, we study the Darboux equations in both classical and system form, which
give the elliptic Painlev\'e VI equations by the isomonodromy deformation method. Then we
establish the full correspondence between the special Darboux equations and the special Painlev\'e VI equations.
Instead of the system form, we especially focus on the Darboux equation in a scalar form,
which is the generalization of the classical Lam\'{e} equation. We introduce a new infinite
series expansion (in terms of the compositions of hypergeometric functions and Jacobi elliptic functions)
for the solutions of the Darboux equations and regard special solutions of the
Darboux equations as those terminating series. The Darboux equations characterized in this manner have an almost (but not completely) full correspondence to
the special types of the Painlev\'e VI equations. Finally, we discuss the convergence of these infinite series expansions.
\end{abstract}

\maketitle

\vspace*{6pt}\tableofcontents

\section{Introduction}

The Lam\'e equation
\[
\dfrac{d^2 y}{du^2}+[h-k^2\alpha(\alpha+1)\ts{\sn^2(u,k)}]y=0
\]
was introduced by Lam\'e in 1837 to solve the Laplace equation in the ellipsoidal coordinates (he searched for isothermal surfaces in \cite{Lame}, and arrived at equation 17). Because of its importance in mathematical physics, the Lam\'e equations were solved by series expansions by a number of mathematicians like Ince \cite{Ince2, Ince1}, Erd\'elyi \cite{Erdelyi} and Sleeman \cite{Sleeman}.

Then in 1882, Darboux \cite{Darboux} introduced its generalization
\begin{eqnarray}\label{E:darboux}
                     &&\frac{{d}^{2}y}{{du}^{2}}+\Big(h-\frac{\xi(\xi+1)}{\sn^2(u,k)}-\frac{\eta(\eta+1)\dn^2(u,k)}{\cn^2(u,k)}-\nonumber\\
                     &&\hspace{2.5cm}\frac{\mu(\mu+1)k^2\cn^2(u,k)}{\dn^2(u,k)}-\nu(\nu+1)k^2{\ts\sn^2(u,k)}\Big)y=0,
           \end{eqnarray}
or in the following Weierstrass form
\begin{eqnarray}\label{E:darbouxweierstrass}
                     &&\frac{{d}^{2}y}{{du}^{2}}+\Big(h-\xi(\xi+1)\wp(u,\tau)-\eta(\eta+1)\wp(u+\frac{1}{2},\tau)-\nonumber\\
                     &&\hspace{2.5cm}\mu(\mu+1)\wp(u+\frac{\tau}{2},\tau)-\nu(\nu+1)\wp(u+\frac{1+\tau}{2},\tau)\Big)y=0,
           \end{eqnarray}
where $\wp(\cdot,\tau)$ is the Weierstrass elliptic function with periods $1$ and $\tau$. Some other forms of the Darboux equations are possible. For instance, they can be de-normalized to become the Sparre equations (see Matveev and Smirnov \cite{MS} for detail).

In either form, the Darboux equation is simply the pull-back of the Heun equation via a double cover. Not much was known about these linear equations. But the research was soon redirected as explained in the following paragraphs.

It is well known that two second order linear equations have their solutions related by a first order operator if they differ by a gauge transformation. Thus, a sensible strategy is to tackle a class of gauge equivalent equations rather than one particular equation. Such a gauge equivalent class is known as an isomonodromic family. In a modern language, one writes a family of Heun equations (parametrized by the cross ratio of the four singular points) as a system of first order linear PDEs
\begin{equation}\label{E:heunsystem}
dY=[A_0\dfrac{dx}{x}+A_1\dfrac{dx}{x-1}+A_2\dfrac{d(x-t)}{x-t}]Y
\end{equation}
Then the flatness condition (or integrability condition) of such a system of over-determined (two unknown functions satisfying four equations) PDEs yields the Painlev\'e VI equation in rational form. The readers may refer to the work of Jimbo and Miwa \cite{JM} for a complete treatment.

The search of special solutions of Painlev\'e VI transcendence is a prominent research subject while another more natural path is almost forgotten.
Painlev\'e (\cite{Painleve}) introduced the elliptic Painlev\'e VI equation (and studied by Manin \cite{Manin} in an algebro-geometric setup)
\begin{eqnarray}\label{E:ellipticpainleve}
\frac{{d}^{2}u(\tau)}{{d\tau}^{2}}&=&-\frac{1}{8\pi^2}\Big[a_0^2{\wp'(u(\tau);\tau)}
+{a_1^2\wp'(u(\tau)+\frac{1}{2};\tau)}\nonumber\\
&&\ +
{a_2^2\wp'(u(\tau)+\frac{\tau}{2};\tau)}+(a_3-1)^2{\wp'(u(\tau)+\frac{1+\tau}{2};\tau)}\Big]
\end{eqnarray}
by pulling back the rational Painlev\'e VI equation via a double cover. In elliptic form, the Painlev\'e VI equation shares the same properties as the rational Painlev\'e VI equation but it appears to be much more symmetric.

On the other hand, the original Darboux equation drew little attention in the century following its discovery. There is only some occasion use of it like the study of the Schwarz maps of certain Heun equations by Nehari \cite{Nehari}.

The true importance of \eqref{E:darboux} was established by Treibich and Verdier, which states that a Schr\"odinger type operator has the finite-gap property if and only if it is of the form \eqref{E:darbouxweierstrass} with integral parameters, or its degeneration (see the work of Treibich and Verdier \cite{TV, Verdier} for detail, or a concise account by Veselov \cite{Veselov}). These Darboux equations with integral parameters admit the so-called finite gap solutions \cite{Take3},\cite{smirnov1} which are integral representations of a certain kind. However, solutions of Darboux equations with general parameters are not as well understood as their integral parameters counterpart.

In this paper, we plan to
\begin{enumerate}
\item 
single out the Darboux equations which admit certain special solutions. Our method, which has its origin that at least goes back to \cite{Erdelyi1}, is to expand the solutions of the Darboux equations appropriately, such as
\[\sn(u,k)^{\xi+1}\cn(u,k)^{\eta+1}\dn(u,k)^{\mu+1}
\sum_{m=0}^{\infty}X_m\sn(u,k)^2m\sideset{_2}{_1}{\operatorname{F}}\left({\begin{matrix}
                 a+m,b+m\\
                 c+2m
                 \end{matrix}};\ \sn(u,k)^2\right).
\]
Then we seek the conditions on the parameters of the Darboux equations in order that the infinite sum above terminates. We show that these ``termination conditions" match exactly the well-known conditions given by affine Weyl group $\tilde{D}_4$ discovered by Okamoto \cite{Okamoto} where the Painlev\'e VI admits special solutions in explicit forms (see more explanation below). Moreover, we point out that these terminated sums correspond to Schlesinger transformations \cite{JM} where a general theory of linear transformations between two differential equations where their corresponding local monodromy differ by an integer was developed. These Darboux equations are summarized in Theorem \ref{terminate1}.
\item
study the convergence of the series expansions of solutions of the Darboux equations by applying the theory of three-term recursion by Poincar\'e and Perron in Section \ref{S:Convergence}.
\item
single out the elliptic Painlev\'e VI equations
which admit special solutions in Theorem \ref{riccati} (see Clarkson \cite{Clarkson} or the well-known book \cite{GLS} for detail).
\end{enumerate}
We observe that under the same conditions on the parameters, both the Darboux equations and the elliptic Painlev\'e VI equations admit special solutions. Our strategy towards an understanding of this phenomenon is to assign linear algebraic meaning to these parameters via the following steps.
\begin{enumerate}
\item
The classical Darboux equations are rewritten in a system form in Definition \ref{darbouxsystem}, so that the parameters of the classical Darboux equations become the eigenvalues of the residue matrices of this system form of the Darboux equations at the various singularities.
\item
The Darboux equations in a system form are set to undergo an isomonodromic deformation. The elliptic Painlev\'e VI equations are derived from these deformations in Section \ref{EPainleve}. The parameters of the elliptic Painlev\'e VI equation remain to be the eigenvalues of the residue matrices of the system for of the Darboux equation above at the various singularities.
\end{enumerate}
Therefore we arrive at the main observation (Corollary \ref{observation}) that under the same conditions, both the Darboux equations and the Painlev\'e VI equations have special solutions. In a certain sense, such correspondence is anticipated by Jimbo and Miwa \cite{MW}.

\section{The Darboux Connection and the Elliptic Painlev\'e VI Equation} \label{S:connection}

As we have revised in the introduction, the rational Painlev\'e VI equation comes from the flatness condition of
the system of PDEs \eqref{E:heunsystem}. Therefore, one expects the elliptic Painlev\'e VI equation \eqref{E:ellipticpainleve} to come from the flatness condition of a family of Darboux equations in system form
\[
dY=\left[A_0\dfrac{d\sigma(\wp^{-1}(x);\tau)}{\sigma(\wp^{-1}(x);\tau)}+
      A_1\dfrac{d\sigma(\wp^{-1}(x)+\frac{1}{2};\tau)}{\sigma(\wp^{-1}(x)+\frac{1}{2};\tau)}+
			A_2\dfrac{d\sigma(\wp^{-1}(x)+\frac{\tau}{2};\tau)}{\sigma(\wp^{-1}(x)+\frac{\tau}{2};\tau)}+
			 A_3\dfrac{d\sigma(\wp^{-1}(x)+\frac{1+\tau}{2};\tau)}{\sigma(\wp^{-1}(x)+\frac{1+\tau}{2};\tau)}\right]Y.
\]
We shall give a brief account of such a construction in this section.




\subsection{The Darboux Equation in a System Form} \label{DConnection}

Let $\tau\in\mathbb{H}$ be a point in the upper half plane, and
\[
k^2=\dfrac{\wp(\frac{1+\tau}{2},\tau)-\wp(\frac{1}{2},\tau)}{\wp(\frac{1+\tau}{2},\tau)-\wp(\frac{\tau}{2},\tau)}
\]
is the $\lambda$-invariant of the complex torus $\mathbb{C}/(\mathbb{Z}+\tau\mathbb{Z})$. It is well-known that the map
\[
\begin{array}{rcl}
E=\mathbb{C}/(\mathbb{Z}+\tau\mathbb{Z})&\to&\{(x:y:z)\in\mathbb{P}^2:y^2z=4x(z-x)(z-k^2x)\}\\
u&\mapsto&(\sn^2u:(\sn^2)'(u):1)
\end{array}
\]
is biholomorphic as well as a group isomorphism.

\begin{remark}
In the classical description of the Jacobi elliptic functions, the $\lambda$-invariant of a complex
torus is usually specified, whereas in the classical description of the Weierstrass elliptic functions, the period quotient of a complex torus is usually specified.
\end{remark}

The projective curve $\{(x:y:z)\in\mathbb{P}^2:y^2z=4x(z-x)(z-k^2x)\}$ admits an involution $-1\times:(x:y:z)\mapsto(x:-y:z)$.
The quotient by $\pm 1$ admits an isomorphism
\[
\begin{array}{rcl}
\{(x,y)\in\mathbb{C}\times(\mathbb{C}\backslash\{0\}):y^2=4x(1-x)(1-k^2x)\}/\{\pm 1\}&\to&\mathbb{C}\backslash\{0,1,k^{-2}\}\\
(x:y:z)&\mapsto&(x:z)
\end{array}
\]
Now there is a well-known equation called the Heun equation in system form, which is defined on the punctured Riemann sphere
$\mathbb{P}^1\backslash\{0,1,k^{-2},\infty\}$. Let $A_0$, $A_1$, $A_2$ $A_3=-A_0-A_1-A_2$ be $2\times 2$ matrices with complex
entries and consider the matrix-valued one-form
\[
\omega=A_0\dfrac{dx}{x}+A_1\dfrac{dx}{x-1}+A_2\dfrac{dx}{x-k^{-2}}.
\]
The Heun equation in system form asks for a vector-valued function $Y$ satisfying $dY=\omega Y$. The Darboux equation is indeed the pull-back
of Heun equation via the map $E\backslash\{0,\frac{1}{2},\frac{\tau}{2},\frac{1+\tau}{2}\}\stackrel{p}{\to}
\mathbb{P}^1\backslash\{0,1,k^{-2},\infty\}$. The pull-back $p^*\omega$ has a simple pole at the four points of order two.
So our target is to obtain an analogue of $dx/(x-a)$ in the torus so as to write the Darboux equation in a system form.

Recall that for each $a\in\mathbb{C}$, ``$x-a$" means the linear form
\[
\begin{array}{rcl}
\mathbb{C}^2\backslash\{(0,0)\}&\to&\mathbb{C}\\
(x,y)&\mapsto&x-ay
\end{array}
\]
which fails to be well-defined on the projective space $\mathbb{P}^1$, whereas its (unique) zero locus is well-defined (so that
this linear form is a section of a line bundle of degree one.). The Weierstrass sigma function has the same property
in the sense that $\sigma:\mathbb{C}\to\mathbb{C}$ is not periodic, but its (unique) zero locus is well-defined modulo
$\mathbb{Z}+\tau\mathbb{Z}$. Therefore, for a fixed $\tau\in\mathbb{H}$, $p^*\omega$ can be written as
\begin{eqnarray*}
p^*\omega&=&A_0\dfrac{d\sigma(z)}{\sigma(z)}+A_1\dfrac{d\sigma(z+\frac{1}{2})}{\sigma(z+\frac{1}{2})}
+A_2\dfrac{d\sigma(z+\frac{\tau}{2})}{\sigma(z+\frac{\tau}{2})}
+A_3\dfrac{d\sigma(z+\frac{1+\tau}{2})}{\sigma(z+\frac{1+\tau}{2})}\\
&=&\left(A_0\zeta(z)+A_1\zeta(z+\frac{1}{2})+A_2\zeta(z+\frac{\tau}{2})+A_3\zeta(z+\frac{1+\tau}{2})\right) dz.
\end{eqnarray*}
Recall that for each $z\in\mathbb{C}$,
\[
\zeta(z+1)=\zeta(z)+2\zeta(\frac{1}{2})\;\;\mbox{ and }\;\;\zeta(z+\tau)=\zeta(z)+2\zeta(\frac{\tau}{2}).
\]
Thus,
\begin{eqnarray*}
&&A_0\zeta(z+1)+A_1\zeta(z+\frac{1}{2}+1)+A_2\zeta(z+\frac{\tau}{2}+1)+A_3\zeta(z+\frac{1+\tau}{2}+1)\\
&=&A_0\zeta(z)+A_1\zeta(z+\frac{1}{2})+A_2\zeta(z+\frac{\tau}{2})+A_3\zeta(z+\frac{1+\tau}{2})+
   2(A_0+A_1+A_2+A_3)\zeta(\frac{1}{2})\\
&=&A_0\zeta(z)+A_1\zeta(z+\frac{1}{2})+A_2\zeta(z+\frac{\tau}{2})+A_3\zeta(z+\frac{1+\tau}{2}),
\end{eqnarray*}
as the four matrices $A_0$,..., $A_3$ are chosen so that $A_0+A_1+A_2+A_3=0$. Hence the matrix-valued form $p^*\omega$
has a period $1$. Similarly, $\tau$ is a period of $p^*\omega$ also. Consequently, $p^*\omega$ is well-defined on the
torus $E$. Therefore, we arrive at a reformulation of the Darboux equation in a system form.

\begin{definition}\label{darbouxsystem}
Let $A_0$, $A_1$, $A_2$, $A_3\in\mathfrak{s}l_2(\mathbb{C})$ such that $A_0+A_1+A_2+A_3=0$. Let
\begin{equation} \label{connection}
\Omega=A_0\dfrac{d\sigma(z)}{\sigma(z)}+A_1\dfrac{d\sigma(z+\frac{1}{2})}{\sigma(z+\frac{1}{2})}
+A_2\dfrac{d\sigma(z+\frac{\tau}{2})}{\sigma(z+\frac{\tau}{2})}
+A_3\dfrac{d\sigma(z+\frac{1+\tau}{2})}{\sigma(z+\frac{1+\tau}{2})}
\end{equation}
be a matrix-valued one-form defined on $\mathbb{C}/(\mathbb{Z}+\tau\mathbb{Z})$, except at the points of order two.
Then the Darboux equation in a system form is defined by
\[
dY=\Omega Y.
\]
\end{definition}

The correspondence between the system form and the classical form of the Darboux equation can be made explicit as follows. First of all, we need an elementary lemma.

\begin{lemma}
Let $A=(a_{ij})_{1\leq i,j\leq 2}$ be a matrix-valued holomorphic function. If $Y=(y_1,y_2)^T$ is a vector-valued function satisfying $Y'=AY$. Then,
\[
y''_1+[-a_{11}-a_{22}-\dfrac{a'_{12}}{a_{12}}]y'_1+[a_{11}a_{22}-a_{21}a_{12}-a_{12}(\dfrac{a_{11}}{a_{12}})']y_1=0.
\]
\end{lemma}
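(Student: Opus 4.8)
The plan is to eliminate $y_2$ from the first-order system and reduce to a single second-order equation for $y_1$, via the classical companion-type reduction. Writing $Y'=AY$ componentwise gives
\begin{align*}
y_1' &= a_{11} y_1 + a_{12} y_2,\\
y_2' &= a_{21} y_1 + a_{22} y_2.
\end{align*}
Because $a_{12}$ occurs in the denominators of the asserted identity, I would work on a domain where $a_{12}$ is nowhere vanishing and solve the first equation for $y_2 = (y_1' - a_{11} y_1)/a_{12}$.

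First I would differentiate the first equation to obtain $y_1'' = a_{11}' y_1 + a_{11} y_1' + a_{12}' y_2 + a_{12} y_2'$, and then substitute $y_2' = a_{21} y_1 + a_{22} y_2$ from the second equation. This yields an expression for $y_1''$ that is linear in $y_1$, $y_1'$ and $y_2$. Replacing $y_2$ by $(y_1' - a_{11} y_1)/a_{12}$ removes the remaining dependence on $y_2$, and I would then collect the coefficients of $y_1'$ and of $y_1$ separately.

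The coefficient of $y_1'$ comes out directly as $a_{11} + a_{22} + a_{12}'/a_{12}$, which after transposing everything to one side matches the bracket $[-a_{11} - a_{22} - a_{12}'/a_{12}]$ in the statement. The only step needing a moment of care is the coefficient of $y_1$: the collected terms contain the combination $a_{11}' - a_{11} a_{12}'/a_{12}$, which by the quotient rule is exactly $a_{12}\,(a_{11}/a_{12})'$; recognizing this repackaging is what produces the form printed in the lemma, after which the residual terms $a_{11} a_{22} - a_{21} a_{12}$ match verbatim. There is no genuine obstacle beyond this bookkeeping. I would, however, record the standing assumption that $a_{12}$ is non-vanishing, since the normal form—and in particular the factor $a_{12}'/a_{12}$—is meaningless at zeros of $a_{12}$, where the elimination degenerates.
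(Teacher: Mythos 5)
Your elimination of $y_2$ and the collection of coefficients is exactly the ``direct calculation'' the paper invokes (its proof consists of those two words), and your identification of $a_{11}'-a_{11}a_{12}'/a_{12}$ with $a_{12}(a_{11}/a_{12})'$ correctly reproduces the stated coefficient of $y_1$. The proof is correct and follows the same route as the paper; your added remark that $a_{12}$ must be non-vanishing is a sensible precision consistent with the paper's later treatment of zeros of $a_{12}$ as apparent singularities.
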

\begin{proof}
Direct calculation.
\end{proof}

In this lemma, we observe that $y_1$ satisfies a second order ODE with two kinds of singularities:
\begin{itemize}
    \item[(i)] poles of $a_{11}+a_{22}$ and that of $a_{11}a_{22}-a_{12}a_{21}$. These are called {\it essential singularities}.
    \item[(ii)] zeros or poles of $a_{12}$. These are called {\it apparent singularities}.
\end{itemize}
Note that the essential singularities of this equation are invariant under similarity transformations of $A$. The same is not true for its apparent counterpart.

Now we revise Definition \ref{darbouxsystem}. From the definition of $\Omega$ in (\ref{connection}), the first component of $Y$ satisfies a second order ODE with essential singularities $0$, $\frac{1}{2}$, $\frac{\tau}{2}$ and $\frac{1+\tau}{2}$. The (1,2) entry of $\Omega$ is an elliptic one-form with four poles, and hence it has four zeros. So it is sensible to make the following normalization:
\begin{itemize}
    \item[(i)] $A_0$, $A_1$, $A_2$ and $A_3$ have traces zero and
    \item[(ii)] the (1,2) entry of $\Omega$ has four zeros at $0$, $\frac{1}{2}$, $\frac{\tau}{2}$ and $\frac{1+\tau}{2}$
		            by applying a simultaneous similarity transformation to $A_0$, $A_1$, $A_2$ and $A_3$
\end{itemize}
so that the first component of $Y$ satisfies a second ODE with vanishing first order term. This second order ODE is nothing but the classical Darboux equation.

\begin{remark}
Instead of the Weierstrass sigma functions, the one-form $\Omega$ in (\ref{connection})
can be written in terms of the theta functions as follows
\[
A_0\dfrac{d\vartheta_1(z)}{\vartheta_1(z)}+A_1\dfrac{d\vartheta_2(z)}{\vartheta_2(z)}
+A_2\dfrac{d\vartheta_3(z)}{\vartheta_3(z)}
+A_3\dfrac{d\vartheta_4(z)}{\vartheta_4(z)}.
\]
\end{remark}

The following theorem provides the sufficient conditions for the Darboux equation having special solutions.

\begin{theorem}\label{SpSoln}
Let $A_0$, $A_1$, $A_2$, $A_3\in\mathfrak{s}l_2(\mathbb{C})$ such that $A_0+A_1+A_2+A_3=0$, and
let
$\pm a_j/2$ be the eigenvalues of $A_j$ ($j=0,1,2,3$).
If
\begin{itemize}
\item[(i)]
one of $a_0$, $a_1$, $a_2$ or $a_3$ is an integer,
then there exists $h\in\mathbb{C}$ such that the
Darboux equation
\begin{equation}\label{DarbouxEqn}
\dfrac{dY}{dz}-\left[A_0\dfrac{d\sigma(z)}{\sigma(z)}+A_1\dfrac{d\sigma(z+\frac{1}{2})}{\sigma(z+\frac{1}{2})}
+A_2\dfrac{d\sigma(z+\frac{\tau}{2})}{\sigma(z+\frac{\tau}{2})}
+A_3\dfrac{d\sigma(z+\frac{1+\tau}{2})}{\sigma(z+\frac{1+\tau}{2})}\right]Y=hY
\end{equation}
has a solution which is a sum of finitely many Gauss hypergeometric functions.
\item[(ii)]
$a_0\pm a_1 \pm a_2 \pm a_3$
is an
even
integer,
then there exists $h\in\mathbb{C}$ such that the Darboux equation (\ref{DarbouxEqn}) has a solution which is a product of powers of elliptic functions times an elliptic function with poles at the order two points.
\end{itemize}
\end{theorem}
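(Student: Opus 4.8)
The plan is to reduce the system to the classical scalar Darboux equation and then run the Erd\'elyi-type hypergeometric expansion telegraphed in the introduction. First I would apply the elementary reduction lemma above, together with the two normalizations (trace-free $A_j$, and the $(1,2)$-entry of $\Omega$ vanishing at the four order-two points), to pass from $dY/dz=(M(z)+hI)Y$ to a second-order scalar equation for the first component $y_1$ with vanishing first-order term; this is precisely the classical Darboux equation \eqref{E:darboux}, with the accessory parameter $h$ inherited from the system. The bookkeeping here records how the residue data $\pm a_j/2$ control the local exponents at the four singularities: the exponent pairs are $\{-\xi,\xi+1\}$, $\{-\eta,\eta+1\}$, $\{-\mu,\mu+1\}$, $\{-\nu,\nu+1\}$, and the exponent differences are the monodromy invariants, so that each $a_j$ equals, up to sign, the corresponding $2\xi_j+1$. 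Substituting $x=\sn^2(u,k)$ then turns the Darboux equation into a Heun equation on $\mathbb{P}^1\setminus\{0,1,k^{-2},\infty\}$, the form in which the hypergeometric expansion is cleanest.

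For part (i) I would insert the series ansatz into the Heun equation, writing the solution as the prefactor $\sn^{\xi+1}\cn^{\eta+1}\dn^{\mu+1}$ times $\sum_{m\ge 0}X_m\,\sn^{2m}\,{}_2F_1(a+m,b+m;c+2m;\sn^2)$, with $a,b,c$ chosen so that each ${}_2F_1$ solves the hypergeometric part of the operator. The Heun operator differs from this matched hypergeometric operator by a single extra term; applying it termwise and collapsing the result with the standard contiguous relations for ${}_2F_1$ yields a three-term recurrence $\alpha_m X_{m+1}+\beta_m(h)X_m+\gamma_m X_{m-1}=0$ with $X_{-1}=0$, in which only $\beta_m$ carries $h$, and linearly. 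A solution that is a finite sum of hypergeometric functions is exactly one with $X_m=0$ for all $m>N$; substituting this into the recurrence at level $m=N+1$ forces $\gamma_{N+1}=0$. I would then compute $\gamma_m$ explicitly and check that it factors so that $\gamma_{N+1}=0$ holds for some integer $N\ge 0$ if and only if one of $a_0,a_1,a_2,a_3$ is an integer. Once $N$ is fixed, the equations for $X_0,\dots,X_N$ form an $(N+1)\times(N+1)$ tridiagonal system, and demanding a nontrivial solution pins $h$ to one of its eigenvalues; any such $h$ produces the desired finite sum of Gauss hypergeometric functions.

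For part (ii) I would use the same expansion but impose more: the solution must be a product of powers of $\sn,\cn,\dn$ with an elliptic function (a polynomial in $\sn^2$, hence with poles only at the order-two points). Beyond the outer sum terminating, this requires every surviving ${}_2F_1(a+m,b+m;c+2m;\sn^2)$ to reduce to a polynomial, which happens precisely when a numerator parameter is a non-positive integer, $a+m\in\mathbb{Z}_{\le 0}$ or $b+m\in\mathbb{Z}_{\le 0}$. Combining this with the outer truncation upgrades the single integrality of part (i) to a balance among all four exponent differences. The four independent signs correspond to the choice, at each singular point, of the exponent $-a_j/2$ against $+a_j/2$ (equivalently $-\xi_j$ against $\xi_j+1$) used to build the prefactor; tracking them, the balance reads $a_0\pm a_1\pm a_2\pm a_3\in 2\mathbb{Z}$, the $\tilde{D}_4$ condition of the introduction. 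As before, $h$ is then fixed as an eigenvalue of the finite tridiagonal truncation, and the resulting solution has exactly the stated shape.

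The hardest part will be closing the recurrence in part (i): arranging the contiguous relations so that the Heun operator applied to $\sn^{2m}\,{}_2F_1(a+m,b+m;c+2m;\sn^2)$ collapses onto only three neighbouring indices rather than spilling across five, and then extracting $\gamma_m$ in a factored form clean enough to read off that $\gamma_{N+1}=0$ is equivalent to the integrality of some $a_j$. A secondary difficulty is the sign bookkeeping in part (ii): checking that the four local-exponent choices genuinely realize all the sign patterns in $a_0\pm a_1\pm a_2\pm a_3$ and that the even (not merely integer) parity is forced by the $\sn^2$ double-cover structure. The convergence of the non-terminating expansions is a separate issue, handled by the Poincar\'e--Perron analysis of Section \ref{S:Convergence}, and is not needed here since termination makes every sum finite.
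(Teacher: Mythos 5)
Your overall route---reduce the system to the scalar Darboux equation, substitute the prefactor times $\sum_m X_m \sn^{2m}\,{}_2F_1(a+m,b+m;c+2m;\sn^2u)$, close a three-term recurrence with the contiguous relations, and read off termination from the vanishing of the down-coupling coefficient, with $h$ pinned down by a finite tridiagonal eigenvalue problem---is exactly the paper's (Definition \ref{Series}, Theorems \ref{terminate} and \ref{terminate1}, of which Theorem \ref{SpSoln} is a direct rephrasing). But there is a genuine gap in how you distribute the termination conditions between parts (i) and (ii). Your claim that $\gamma_{N+1}=0$ for some $N\geq 0$ \emph{if and only if} one of $a_0,\dots,a_3$ is an integer is false: the coefficient $K_{N+1}$ factors as a product of \emph{four} linear pieces, essentially $(\op_{++++}+2N+4)(\op_{+++-}+2N+3)(\op_{+0+0}+N+2)(2\mu+2N+3)$ up to a nonzero denominator, and only the last factor expresses the integrality of a single exponent difference. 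The first two factors are conditions on the \emph{sum} $\pm\xi\pm\eta\pm\mu\pm\nu$, i.e. precisely the hypothesis of part (ii), and they kill the outer sum while simultaneously making every surviving ${}_2F_1$ terminate (its first upper parameter becomes $-q+m\leq 0$), which is why that branch yields the Darboux polynomial of (ii). The $\mu\in-\N-\tfrac32$ factor, by contrast, truncates the outer sum \emph{without} making the inner hypergeometric functions polynomials, and that branch alone is part (i). So the two hypotheses are logically independent (e.g. $a_j=\tfrac12$ for all $j$ satisfies (ii) but not (i)), and your framing of (ii) as ``(i) plus an extra polynomiality requirement on each ${}_2F_1$'' cannot be repaired by a computation---the factorization you would find simply does not support it.

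A secondary point: the single expansion at $u=0$ with the exponent $\xi+1$ only reaches the two sign patterns $\op_{++++}$ and $\op_{+++-}$; to realize all eight patterns $a_0\pm a_1\pm a_2\pm a_3$ in part (ii) (and all four parameters in part (i)) the paper applies the $192$ symmetries of Section \ref{symmetries} to the expansion before imposing $K_{q+1}=0$, which is the content of Theorem \ref{terminate1}. Your remark about choosing $-\xi$ versus $\xi+1$ at each singularity is the right germ of this, but it must be implemented as a transformation of the series itself, not merely of the prefactor. The rest of your argument---the accessory parameter $h$ entering $L_m$ linearly, the $(q+1)$ admissible values of $h$ from the vanishing of the finite continued fraction $X_{q+1}(h)=0$, and the irrelevance of the Poincar\'e--Perron convergence analysis in the terminating case---matches the paper.
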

\begin{proof}
(i) is a rephrasing of part iv) of Theorem \ref{terminate1}, while (ii) is a rephrasing of part i), ii) and iii) of Theorem \ref{terminate1}.
\end{proof}

\begin{remark}
Instead of Darboux equations, a sheaf theoretic description of two types of special solutions of Heun equations
(which are sections of sheaves constructed from rigid local systems) will be given in \cite{CCT1}.
\end{remark}


\subsection{The Correspondence to the Elliptic Painlev\'e VI Equation} \label{EPainleve}


It is well-known that the Painlev\'e VI equation comes from an isomonodromic family of Heun equations
(see \cite[Fuchs]{Fuchs}, \cite[Schlesinger]{Schlesinger}, \cite[Jimbo-Miwa]{JM}, \cite[Takemura]{Take3}).
One expects that an isomonodromic family of Darboux equations
(the monodromy of Darboux equations remains unchanged as $\tau$ varies)
yields the elliptic version of Painlev\'e VI equation
in the same manner. Our first task is to describe a family of elliptic curves.

Let $\mathbb{H}\subset\mathbb{C}$ be the upper half plane and let $g_2$, $g_3:\mathbb{H}\to\mathbb{C}$ be the
usual Eisenstein series. Let
\[
\mathbb{E}=\{(x,y,\tau)\in\mathbb{C}^2\times\mathbb{H}:y^2=4x^3-g_2(\tau)x-g_3(\tau)\}.
\]
Then $\mathbb{E}\to\mathbb{H}$ is a family of elliptic curves. Moreover,
\[
\begin{array}{l}
D_0=(\mbox{line at }\infty\times\mathbb{H})\cap\mathbb{E},\\
D_1=\{(x,y,\tau)\in\mathbb{E}:x=\wp(\frac{1}{2},\tau)\},\\
D_2=\{(x,y,\tau)\in\mathbb{E}:x=\wp(\frac{\tau}{2},\tau)\},\\
D_3=\{(x,y,\tau)\in\mathbb{E}:x=\wp(\frac{1}{2}+\frac{\tau}{2},\tau)\}
\end{array}
\]
are the four half-periods loci, so that
$\mathbb{E}\backslash (D_0\cup D_1\cup D_2\cup D_3)\to\mathbb{H}$ is the family of elliptic curves with
the four points of order two deleted. Notice that $\mathbb{E}$ admits an obvious involution
$(x,y,\tau)\mapsto(x,-y,\tau)$. The quotient of $\mathbb{E}$ by such an involution is the usual
family of $\mathbb{P}^1$ with four points deleted, as described as follows. Consider the
following hypersurfaces in $\mathbb{C}\times\mathbb{C}$,
\[
\begin{array}{l}
H_0=\{(z,t)\in\mathbb{C}\times\mathbb{C}:z=0\},\\
H_1=\{(z,t)\in\mathbb{C}\times\mathbb{C}:z=1\},\\
H_2=\{(z,t)\in\mathbb{C}\times\mathbb{C}:z=t\},
\end{array}
\]
then the projection to the second coordinate
$(\mathbb{C}\times\mathbb{C})\backslash (H_0\cup H_1\cup H_2)\to\mathbb{C}\backslash\{0,1\}$ is the
family of four-punctured Riemann sphere with one of the punctures varying. The map
\[
p:(\mathbb{E}\backslash (D_0\cup D_1\cup D_2\cup D_3))\to
(\mathbb{C}\times\mathbb{C})\backslash (H_0\cup H_1\cup H_2)
\]
is the two-to-one map described above.

Pick four matrix-valued analytic functions $A_0$, $A_1$, $A_2$,
$A_3:\mathbb{C}\backslash\{0,1\}\to\mathfrak{s}l(2)$ such that $A_0+A_1+A_2+A_3=0$, then
\[
\omega=-\left[\dfrac{A_0(t)}{x}+\dfrac{A_1(t)}{x-1}+\dfrac{A_2(t)}{x-t}\right]dx
\]
is a one-form defined on $(\mathbb{C}\times\mathbb{C})\backslash(H_0\cup H_1\cup H_2)$, and
\begin{eqnarray*}
\Omega&=&p^*\omega\\
      &=&-\left[A_0\dfrac{d\sigma(\wp^{-1}(x);\tau)}{\sigma(\wp^{-1}(x);\tau)}+
      A_1\dfrac{d\sigma(\wp^{-1}(x)+\frac{1}{2};\tau)}{\sigma(\wp^{-1}(x)+\frac{1}{2};\tau)}+
			A_2\dfrac{d\sigma(\wp^{-1}(x)+\frac{\tau}{2};\tau)}{\sigma(\wp^{-1}(x)+\frac{\tau}{2};\tau)}+
			 A_3\dfrac{d\sigma(\wp^{-1}(x)+\frac{1+\tau}{2};\tau)}{\sigma(\wp^{-1}(x)+\frac{1+\tau}{2};\tau)}\right]
\end{eqnarray*}
or equivalently,
\[
\Omega=-\sum_{j=0}^3A_j\dfrac{d\vartheta_{j+1}(\wp^{-1}(x);\tau)}{\vartheta_{j+1}(\wp^{-1}(x);\tau)}
\]
is a well-defined matrix-valued one form defined on $\mathbb{E}\backslash(D_0\cup D_1\cup D_2\cup D_3)$, with
log singularities only. The system of PDEs $dY=\omega Y$ is over-determined in general and does not have any
local solutions. However, the flatness condition $d\omega=\omega\wedge\omega$ guarantees the existence
of local solutions of $dY=\omega Y$ (see \cite{Poberezhny}). If one chooses a convenient basis so that $A_3$ is diagonal,
such a flatness condition implies that $X(t)$, the zero locus of the (1,2) entry of $\omega$, satisfies
the usual Painlev\'e VI equation \cite[\S3]{Mahoux}, \cite[\S2]{Take5}.

Similarly, the flatness condition $d\Omega=\Omega\wedge\Omega$ guarantees the existence
of local solutions of the family of Darboux equations $dY=\Omega Y$. In case $A_3$ is diagonal, the ODE
satisfied by $u(\tau)$, the zero locus of the (1,2) entry of $\Omega$, is known as the elliptic
Painlev\'e VI equation.

Note that $\Omega=p^*\omega$. The flatness condition $d\omega=\omega\wedge\omega$
implies that of $\Omega$. Thus $u(\tau)=p^*X(t)$. The elliptic Painlev\'e VI equation is the pull-back of the
rational Painlev\'e VI equation by $p$. This is not a trivial computation and the geometric detail can be found
in \cite{Manin}. The consequence is the following form of the elliptic Painlev\'e equation:

\begin{eqnarray} \label{EP6}
\frac{{d}^{2}u(\tau)}{{d\tau}^{2}}&=&-\frac{1}{8\pi^2}\Big[a_0^2{\wp'(u(\tau);\tau)}
+{a_1^2\wp'(u(\tau)+\frac{1}{2};\tau)}\nonumber\\
&&\ +
{a_2^2\wp'(u(\tau)+\frac{\tau}{2};\tau)}+(a_3-1)^2{\wp'(u(\tau)+\frac{1+\tau}{2};\tau)}\Big],
\end{eqnarray}
where
$\pm a_j/2$ are the eigenvalues of $A_j$ ($j=0,1,2,3$).
Following the idea of Manin \cite{Manin}, the elliptic Painlev\'e VI equation (\ref{EP6}) is
equivalent to the classical Painlev\'e VI equation
\begin{eqnarray} \label{P6}
\frac{d^2X}{dt^2}&=&\frac{1}{2}\bigg(\frac{1}{X}+\frac{1}{X-1}+\frac{1}{X-t}\bigg)\left(\frac{dX}{dt}\right)^2
-\bigg(\frac{1}{t}+\frac{1}{t-1}+\frac{1}{X-t}\bigg)\frac{dX}{dt}\nonumber\\&& \
+\frac{X(X-1)(X-t)}{t^2(t-1)^2}\bigg(\alpha  +\beta \frac{t}{X^2}
 \  +\gamma\frac{t-1}{(X-1)^2}+\delta\frac{t(t-1)}{(X-t)^2}\bigg),
\end{eqnarray}

where
\[(a_0^2,a_1^2,a_2^2,(a_3-1)^2)=(-2\beta,2\gamma,1-2\delta,2\alpha).\]
Moreover, the group of symmetries of (\ref{EP6}) can be generated by the following transformations
(see \cite[\S3.2]{Manin}):
\begin{itemize}
\item[(i)] $(a_i)\mapsto(-a_i)$ for $i=0,1,2$ and $(a_3-1)\mapsto(-a_3+1)$
\item[(ii)] Permutations of $(a_0,a_1,a_2,a_3-1)$;
\item[(iii)] $(a_0,a_1,a_2,a_3-1)\mapsto(a_0+n_0,a_1+n_1,a_2+n_2,a_3-1+n_3)$, where $n_0+n_1+n_2+n_3\equiv0 \mod 2$ and
             $n_i\in\mathbb{Z}.$
\end{itemize}
The following theorem gives the conditions for the Painlev\'e VI equation (\ref{P6}) having one-parameter
families of solutions expressed in terms of the hypergeometric functions (see \cite[Theorem 48.3]{GLS}):
\begin{theorem} \label{riccati}
If either
\begin{equation}\label{condition}
a_0+\epsilon_1a_1+\epsilon_2a_2+\epsilon_3a_3\in2\mathbb{Z} \mbox{ for some }
\epsilon_1,\epsilon_2,\epsilon_3\in\{1,-1\}
\end{equation}
or
\begin{equation}\label{condition1}
(a_0-n)(a_1-n)(a_2-n)(a_3-n)=0 \mbox{ for some } n\in\mathbb{Z},
\end{equation}
then the Painlev\'e VI equation (\ref{P6}) (or (\ref{EP6})) has one-parameter
families of solutions expressed in terms of the hypergeometric functions.
\end{theorem}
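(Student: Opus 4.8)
The plan is to work inside the isomonodromy picture of this section. A solution $X(t)$ of \eqref{P6} is the zero locus of the $(1,2)$ entry of the flat connection $\omega$, and a one-parameter family of \emph{classical} solutions should correspond to a sub-family of connections whose monodromy representation is reducible. I would accordingly argue in three stages: produce a seed Riccati family on the reducibility locus, linearize it to the hypergeometric equation, and then propagate over the whole admissible locus using the symmetries (i)--(iii) together with Schlesinger transformations; the conclusion for \eqref{EP6} transfers to \eqref{P6} through the equivalence of \cite{Manin} and the dictionary $(a_0^2,a_1^2,a_2^2,(a_3-1)^2)=(-2\beta,2\gamma,1-2\delta,2\alpha)$.

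\emph{The seed.} Since the eigenvalues of $A_j$ are $\pm a_j/2$, the local monodromy of $dY=\omega Y$ about the puncture $j$ has eigenvalues $e^{\pm\pi i a_j}$. The product of the four monodromies is the identity, so the representation fixes a common line precisely when $e^{\pi i(a_0+\epsilon_1 a_1+\epsilon_2 a_2+\epsilon_3 a_3)}=1$ for some $\epsilon_i\in\{\pm1\}$, that is, exactly under \eqref{condition}. (Written in the symmetric variables $(a_0,a_1,a_2,a_3-1)$ of the dictionary this becomes an odd-integer sum, the familiar reducibility wall of the four-punctured sphere.) On this locus I would triangularize $\omega$ with respect to the invariant line; the isomonodromic deformation then restricts to rank-one data and the motion of the off-diagonal zero $X(t)$ is governed by a first-order Riccati equation rather than the full second-order \eqref{P6}. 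Writing $X$ through a logarithmic derivative $w'/w$ linearizes this Riccati equation into a second-order linear ODE with three non-apparent regular singular points on $\Pp^1$; after removing the local exponents this is the Gauss hypergeometric equation, so $X(t)$ is expressed through hypergeometric functions.

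\emph{Integer exponents and propagation.} When instead some $a_j\in\Z$, which is \eqref{condition1}, the monodromy at that puncture is the scalar $\pm I$; a Schlesinger transformation \cite{JM}, shifting one exponent by an integer while preserving expressibility in hypergeometric functions, then moves the parameters onto the reducibility locus \eqref{condition} handled above. To reach \emph{every} admissible value I would run the B\"acklund symmetries (i)--(iii), which generate the affine Weyl group $\tilde D_4$ and act affinely on $(a_0,a_1,a_2,a_3-1)$ while carrying solutions of \eqref{EP6} to solutions. The key combinatorial point is that the union of the loci \eqref{condition} and \eqref{condition1} is stable under these symmetries and under Schlesinger shifts, and that each of its sheets is reached from the seed wall; since all these operations send classical solutions to classical solutions, a hypergeometric one-parameter family exists at every parameter satisfying \eqref{condition} or \eqref{condition1}.

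The main obstacle is the seed step. The delicate part is to show that reducibility of the monodromy genuinely forces the deformation into the classical stratum, where $X(t)$ obeys a Riccati equation, and then to verify that the linearized equation really carries only three non-apparent regular singular points, so that it is the hypergeometric rather than the Heun equation. This requires a careful account of how the apparent singularity degenerates once the invariant sub-bundle appears, and is where the argument is least formal. By contrast the orbit bookkeeping — checking that the reflection walls of $\tilde D_4$ reproduce \eqref{condition} and \eqref{condition1}, and reconciling the even/odd shift coming from the $(a_3-1)$ normalization — is routine once the seed is established.
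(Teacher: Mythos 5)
The paper does not actually prove Theorem \ref{riccati}: it is imported verbatim from \cite[Theorem 48.3]{GLS}, so any argument you supply goes beyond what the authors wrote. Your overall strategy (Riccati reduction on a reducibility wall, linearization to the hypergeometric equation, propagation by symmetries) is the standard route behind that cited result and is sound for condition \eqref{condition}, modulo one caveat at the seed: reducibility of the monodromy is not ``precisely'' equivalent to $e^{\pi i(a_0+\epsilon_1a_1+\epsilon_2a_2+\epsilon_3a_3)}=1$. That eigenvalue identity is only a necessary condition for a common invariant line; what you actually need (and what is true) is that under \eqref{condition} there exists a one-parameter family of \emph{reducible} Fuchsian systems with the prescribed local exponents, and that for these the isomonodromic motion of $X(t)$ closes into a first-order Riccati equation. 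Establishing that is the substance of the seed step, not a consequence of multiplying eigenvalues, and you rightly flag it as the least formal part of your plan.

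The genuine gap is your treatment of \eqref{condition1}. You claim that when some $a_j\in\mathbb{Z}$ a Schlesinger transformation ``moves the parameters onto the reducibility locus \eqref{condition}''. It cannot. Schlesinger transformations together with the symmetries (i)--(iii) act on $(a_0,a_1,a_2,a_3-1)$ by permutations, sign changes and integer translations of even total sum, and this group preserves each of the two wall families \eqref{condition} and \eqref{condition1} separately. Concretely, for $(a_0,a_1,a_2,a_3-1)=(0,\sqrt{2},\sqrt{3},\sqrt{5})$ condition \eqref{condition1} holds, but no element of that group produces $a_0+\epsilon_1a_1+\epsilon_2a_2+\epsilon_3a_3\in 2\mathbb{Z}$, since $\pm\sqrt{2}\pm\sqrt{3}\pm\sqrt{5}$ plus an integer is never an integer. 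To pass between the two families one needs the additional Okamoto transformation $b_j\mapsto b_j-\tfrac{1}{2}(b_0+b_1+b_2+b_3)$, which lies outside the group generated by (i)--(iii) as listed in the paper, or else a direct Riccati-type argument for the case $a_j\in\mathbb{Z}$. A secondary inaccuracy in the same paragraph: when $a_j\in\mathbb{Z}$ the local monodromy at that puncture need not be $\pm I$; it can be $\pm1$ times a nontrivial unipotent block, and the diagonalizable (apparent) case is exactly the one that must be singled out before an elementary Schlesinger transformation can remove the singularity.
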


\begin{example}
For the case that $a_0=0,$ the Painlev\'e VI equation (\ref{EP6}) (and (\ref{P6}) resp.) has a trivial solution
$u(\tau)\equiv0$ (and $X(t)\equiv0$ resp.).
\end{example}

The following important observation can be seen from Theorem \ref{SpSoln} and Theorem \ref{riccati}:
\begin{corollary} \label{observation}
If the eigenvalues $\pm a_j/2$ of $A_j$ satisfy the condition (\ref{condition})
for some $\epsilon_1,\epsilon_2,\epsilon_3\in\{1,-1\}$
or the condition (\ref{condition1}) for some $n\in\mathbb{Z}$, then both the Darboux equation (\ref{DarbouxEqn})
and Painlev\'e VI equation (\ref{P6}) (or (\ref{EP6})) have special solutions in the sense in
Theorem \ref{SpSoln} and Theorem \ref{riccati} respectively.
\end{corollary}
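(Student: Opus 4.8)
The plan is to recognize that this corollary is purely a matter of matching hypotheses: the two conditions appearing in its statement are exactly the hypotheses of Theorem \ref{riccati} and of the two clauses of Theorem \ref{SpSoln}, so the assertion will follow by invoking both theorems with the \emph{same} eigenvalue data $\{a_j\}$. Accordingly I would split the conclusion into its Painlev\'e VI half and its Darboux half and dispatch each separately, since the two conclusions concern the same parameters but are logically independent.

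I would first dispatch the Painlev\'e VI half, which is immediate. Theorem \ref{riccati} already takes as its hypothesis that either (\ref{condition}) or (\ref{condition1}) holds and concludes that (\ref{P6}) (equivalently (\ref{EP6})) admits a one-parameter family of hypergeometric solutions. Since the corollary assumes precisely one of these two conditions, the Painlev\'e VI conclusion follows with no case distinction required. Next I would handle the Darboux half by matching each condition to a clause of Theorem \ref{SpSoln}. If (\ref{condition1}) holds, then $(a_0-n)(a_1-n)(a_2-n)(a_3-n)=0$ forces one factor to vanish, so some $a_j$ equals the integer $n$; this is verbatim the hypothesis of Theorem \ref{SpSoln}(i), which then supplies an $h\in\mathbb{C}$ for which (\ref{DarbouxEqn}) has a solution that is a finite sum of Gauss hypergeometric functions. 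If instead (\ref{condition}) holds, then $a_0+\epsilon_1a_1+\epsilon_2a_2+\epsilon_3a_3\in2\mathbb{Z}$; writing $\epsilon_ja_j=\pm a_j$ and noting that $2\mathbb{Z}$ is exactly the set of even integers, this is identical to the hypothesis of Theorem \ref{SpSoln}(ii), which supplies an $h$ for which (\ref{DarbouxEqn}) has a solution of the stated elliptic product form. In either case the Darboux conclusion holds, and combined with the Painlev\'e VI half this completes the argument.

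Since the corollary is genuinely a bookkeeping combination of two already-established theorems, there is no substantive obstacle. The only point requiring a word of care is to confirm that no compatibility is needed between the spectral parameter $h$ furnished by Theorem \ref{SpSoln} and the Painlev\'e VI solution produced by Theorem \ref{riccati}. This is so because both statements concern the same eigenvalue data $\{a_j\}$ but are independent assertions: the Darboux clause fixes a particular $h$, whereas the Painlev\'e VI clause is a statement about the isomonodromic deformation in $\tau$ and does not reference $h$ at all. Hence the two conclusions may simply be asserted together, and the corollary records that a single condition, (\ref{condition}) or (\ref{condition1}), on the $\{a_j\}$ simultaneously produces special solutions on both sides.
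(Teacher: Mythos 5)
Your proposal is correct and matches the paper's own (implicit) argument: the paper offers no separate proof, simply noting that the corollary ``can be seen from Theorem \ref{SpSoln} and Theorem \ref{riccati}'', which is exactly the hypothesis-matching you carry out (condition (\ref{condition1}) giving some integral $a_j$ for clause (i), condition (\ref{condition}) being clause (ii) verbatim). Your added remark that the spectral parameter $h$ from the Darboux side needs no compatibility with the Painlev\'e VI side is a correct and harmless elaboration.
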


%

\section{The Darboux Equation in a Scalar Form}

In the previous section, we saw that given a Darboux equation in system form, its first component satisfies the classical Darboux equation. The second component would then be determined by its first component and this second component would be a special kind of function if the first component is of the same kind. Thus, we shall focus on the first component in this section and search for special types of solutions of the classical Darboux equations.

Recall that different types of series expansions of the solution of the Lam\'{e} equation
           \begin{equation*}\label{E:lame}
                 \frac{{d}^{2}y}{{du}^{2}}+\Big(h-{\nu(\nu+1)}{\sn(u,k)^2}\Big)y=0
           \end{equation*}
had been worked on by well-known mathematicians. For instance, the power series expansions and
the Fourier-Jacobi expansions were developed by Ince in \cite{Ince2,Ince1}.	
Then Erd\'{e}lyi \cite{Erdelyi} and Sleeman \cite{Sleeman} expanded the {L}am\'e functions into series of
associated Legendre functions of the first kind and the second kind respectively.
As an example, one of the expansions was (see \cite[(12.1)]{Erdelyi})
\begin{equation}\label{E:legendre}
\sum_{m=0}^\infty C_m \Gamma(\nu-2m+1) P^{2m}_\nu\big(\cn(u,k)\big),
\end{equation}
where
\begin{eqnarray*}
P^n_\nu(z)&=&(-1)^n\frac{\Gamma(\nu+n+1)}{\Gamma(\nu-n+1)\Gamma(n+1)}\left(\frac{1-z}{1+z}\right)^\frac{n}{2}
\sideset{_2}{_1}{\operatorname{F}}\left({\begin{matrix}
                 -\nu, \nu+1\\
                 n+1
                 \end{matrix}};\ \frac{1-z}{2}\right)\\
					 &=&(-1)^n\frac{\Gamma(\nu+n+1)}{2^n\Gamma(\nu-n+1)\Gamma(n+1)}\left({1-z^2}\right)^\frac{n}{2}
             \sideset{_2}{_1}{\operatorname{F}}\left({\begin{matrix}
                 -\nu+n, \nu+n+1\\
                 n+1
                 \end{matrix}};\ \frac{1-z}{2}\right).
\end{eqnarray*}
Notice that the series expansion (\ref{E:legendre}) can also be expressed in terms of hypergeometric functions
\begin{equation*}
\sum_{m=0}^\infty C_m
\frac{\Gamma(\nu+2m+1)}{2^{2m}\Gamma(2m+1)}\sn(u,k)^m
\sideset{_2}{_1}{\operatorname{F}}\left({\begin{matrix}
                 -\nu+2m, \nu+2m+1\\
                 2m+1
                 \end{matrix}};\ \frac{1-\cn(u,k)}{2}\right).
\end{equation*}
By the quadratic tranformations of hypergeometric functions, it becomes
\begin{equation}
\sum_{m=0}^\infty C_m
\frac{\Gamma(\nu+2m+1)}{2^{2m}\Gamma(2m+1)}\sn(u,k)^m
\sideset{_2}{_1}{\operatorname{F}}\left({\begin{matrix}
                 -\nu/2+m, \nu/2+1/2+m\\
                 2m+1
                 \end{matrix}};\ \sn(u,k)^2 \right).
\end{equation}								
In Section \ref{S:series}, we will generalize the series expansion in terms of the compositions of hypergeometric functions and
Jacobi elliptic functions (see (\ref{E:2F1}) in Definition \ref{Series}) for local solutions of the
Darboux equation\footnote{Also known as {D}arboux-{T}reibich-{V}erdier equation \cite{Veselov}.}
           \begin{eqnarray}
                     &&\frac{{d}^{2}y}{{du}^{2}}+\Big(h-\frac{\xi(\xi+1)}{\sn^2(u,k)}-\frac{\eta(\eta+1)\dn^2(u,k)}{\cn^2(u,k)}-\nonumber\\
                     &&\hspace{2.5cm}\frac{\mu(\mu+1)k^2\cn^2(u,k)}{\dn^2(u,k)}-\nu(\nu+1)k^2{\ts\sn^2(u,k)}\Big)y=0
           \end{eqnarray}
on a torus of $\mathbb{C}$ modulo the lattice $\Lambda=\{m\omega_1+n\omega_2:m,n\in\mathbb{Z}\}$, which
can be specified by the Riemann $P$-scheme
	\[
		P_{\mathbb{C}\slash\Lambda}
			\begin{Bmatrix}
\ 0\ &\ K(k)\ &\ K(k)+iK'(k)\ &\ iK'(k)\ &\\
\xi+1&\eta+1&\mu+1&\nu+1&u;\, h\\
-\xi&-\eta&-\mu&-\nu&
			\end{Bmatrix},
	\]
where the entries on the top row represent the locations of the regular singularities and the entries of the next two rows under
the corresponding singularities represent the two exponents of the local solutions there. $u$, $h$ are the independent variable and
the accessory parameter respectively.

The Riemann $P$-scheme of a linear ODE is important as its local monodromies are revealed. Let $X$ be the complement of $\{0,K,iK',K+iK'\}$ in $\mathbb{C}/\Lambda$. Choose a base point $x_0\in X$ and let $V$ be the space of solutions of equation \eqref{E:darboux} in a small neighborhood of $x_0$. Then the monodromy of equation \eqref{E:darboux} is the group representation
\[
\rho:\pi_1(X,x_0)\to GL(V)
\]
defined by analytic continuation along paths. Some important partial information about such monodromy is read off from the Riemann $P$-scheme as follows.

Let $\gamma_0$, $\gamma_K$, $\gamma_{iK'}$ and $\gamma_{K+iK'}\in\pi_1(X,x_0)$ be loops based at $x_0$ which wind around $0$, $K$, $iK'$ and $K+iK'$ respectively for once. Then $\rho(\gamma_0)$, $\rho(\gamma_K)$, $\rho(\gamma_{K+iK'})$ and $\rho(\gamma_{iK'})$ have eigenvalues $e^{2\pi i(\xi+1)}$, $e^{-2\pi i\xi}$;
$e^{2\pi i(\eta+1)}$, $e^{-2\pi i\eta}$; $e^{2\pi i(\mu+1)}$, $e^{-2\pi i\mu}$;
$e^{2\pi i(\nu+1)}$, $e^{-2\pi i\nu}$ respectively. Sometimes, it is useful to projectivise $\rho$ to yield
\[
\sigma: \pi_1(X,x_0)\stackrel{\rho}{\to}GL(V)\to\mathbb{P}GL(V).
\]
(This projectivised monodromy plays an important role in the Schwarz's map.) Then the conjugacy classes
\[
\begin{array}{l}
\sigma(\gamma_0)\cong e^{2\pi i(2\xi+1)};\\
\sigma(\gamma_K)\cong e^{2\pi i(2\eta+1)};\\
\sigma(\gamma_{K+iK'})\cong e^{2\pi i(2\mu+1)};\\
\sigma(\gamma_{iK'})\cong e^{2\pi i(2\nu+1)}
\end{array}
\]
are read off from the difference of local exponents in the Riemann $P$-scheme above.

The monodromy representation of a linear ODE is important. For instance, let $L_1$ and $L_2$ be second order linear ordinary differential operators. Denote the germs of solutions of $L_1$ and $L_2$ at an ordinary point $x_0$ by $V$ and $W$ respectively. Suppose that there is a first order differential operator $G$ (called a gauge transformation) which transforms $V$ to $W$. Then such a gauge transformation induces a $\pi_1(X,x_0)$-linear map from $V$ to $W$. In particular, when $V$ and $W$ are irreducible $\pi_1(X,x_0)$ representations, the gauge transformation $G$ induces an equivalence between the $\pi_1(X,x_0)$ representations $V$ and $W$ (see \cite{Kimura1} for a nice working example).

Conversely, If $L_1$ and $L_2$ have equivalent monodromy representations, there exists a gauge transformation with rational coefficients sending germs of solutions of $L_1$ to that of $L_2$.

A special case is worth extra attention. Suppose that $L$ is a second order ordinary differential operator which has a regular singular point at $a$. Suppose further that $L$ has local exponent difference $1$ at $a$ and the local monodromy of $L$ at $a$ is diagonalizable. There is a special kind of gauge transformation called an {\it elementary Schlesinger transformation} which sends germs of solutions of $L$ to that of an operator $L'$ which does not have a singularity at $a$. The readers may refer to \cite{JM} for the detail.

\subsection{The Hypergeometric Function Series Expansion of Darboux Solutions and the Corresponding
            Special Solutions} \label{S:series}

In this section, we consider one local solution at $u=0$ with exponent $\xi+1$
and call it the {\it local Darboux solution}, denoted by $Dl(\xi,\eta,\mu,\nu;h;u,k)$.
The expansions of $Dl$ at the other regular singular points $K,\, iK^\prime,\, K+iK^\prime$ can be obtained after
applying the symmetries of the Darboux equation
(for more details, see \cite{CCT}).
If $\xi=-\frac{3}{2},-\frac{5}{2},\cdots$, then $Dl(\xi,\eta,\mu,\nu;h;u,k)$ will generically be logarithmic and
we do not discuss this degenerate case  further in this paper.

Recall the definition in \cite{CCT} that
\begin{definition}[({\cite[Definition 7.1]{CCT}})]
Suppose that $\xi\neq-\frac{3}{2},-\frac{5}{2},\cdots$. Let $Dl(\xi,\eta,\mu,\nu;h;u,k)$ be defined by the following series expansion
\begin{equation}\label{E:series}
\sn(u,k)^{\xi+1}\cn(u,k)^{\eta+1}\dn(u,k)^{\mu+1}\sum_{m=0}^\infty C_m\sn(u,k)^{2m},
\end{equation}
where the coefficients $C_m(\xi,\eta,\mu,\nu;h;k)$ satisfy the relation
{\beqn\label{3term}&&(2m+2)(2m+2\xi+3)C_{m+1}\nonumber\\
&&\hspace{.5cm} +\{h-[2m+\eta+\xi+2]^2-k^2[2m+\mu+\xi+2]^2+(k^2+1)(\xi+1)^2\}C_{m}\nonumber\\
&&\hspace{1cm} +k^2(2m+\xi+\eta+\mu+\nu+2)(2m+\xi+\eta+\mu-\nu+1)C_{m-1}=0,\nonumber\\ \eeqn}
where $m\geq0$ and the initial conditions $C_{-1}=0$, $C_0=1$.
\end{definition}

The Heun equation
\begin{equation}
		\label{heun}
			\frac{d^2y}{dt^2}+\Big(\frac{\gamma}{t}+\frac{\delta}{t-1}+\frac{\epsilon}{t-a}\Big)\frac{dy}{dt}+
      \frac{\alpha\beta t-q}{t(t-1)(t-a)}y=0,
\end{equation}
is connected to the Darboux equation by a simple change of dependent and independent variables,
and Erd\'{e}lyi \cite[Eqn(4.2)]{Erdelyi1} (1942) gave the hypergeometric function series expansion of
the local Heun solution. So it makes sense to study the corresponding expansion of the local Darboux solution
$Dl(\xi,\eta,\mu,\nu;h;u,k)$.	


{For abbreviation, we let $\op_{\pm\pm\pm\pm}$ stand for $\pm\xi\pm\eta\pm\mu\pm\nu$ such that
\begin{equation*}
\alpha=\frac{1}{2}\big(\op_{++++}+4\big),\ \beta=\frac{1}{2}\big(\op_{+++-}+3\big).
\end{equation*}
{We also let $\op_{\pm0\pm0}$ stand for $\pm\xi\pm\mu$.}

%
%
%

\begin{definition} \label{Series}
Suppose that $\xi\neq-\frac{3}{2},-\frac{5}{2},\cdots$.
Let $\tilde{Dl}(\xi,\eta,\mu,\nu;h;u,k)$ be defined by the following series expansion

           \begin{eqnarray}\label{E:2F1}
                 &&(\sn u)^{\xi+1}(\cn u)^{\eta+1}(\dn u)^{\mu+1}
                 \sum_{m=0}^\infty
                 \Gamma\Bigg(\frac{1}{2}\big(\op_{+-++}+2m+3)\Bigg)\Gamma\Bigg(\frac{1}{2}\big(\op_{+-+-}+2m+2\big)\Bigg)\nonumber\\
                 &&\times \frac{X_m(\sn u)^{2m}}{\Gamma\big(\op_{+0+0}+2m+3\big)}
                 \sideset{_2}{_1}{\operatorname{F}}\left({\begin{matrix}
                 \frac{1}{2}\big(\op_{++++}+4\big)+m, \frac{1}{2}\big(\op_{+++-}+3\big)+m\\
                 \op_{+0+0}+2m+3
                 \end{matrix}};\ \ts\sn^2 u\right),
                 \nonumber\\
           \end{eqnarray}
in which the coefficients $X_m(\xi,\eta,\mu,\nu;h;k)$ satisfy the relation
\[ L_0X_0+M_0X_1=0, \]

\begin{equation} \label{threeterm}
K_mX_{m-1}+L_mX_m+M_mX_{m+1}=0,\mbox{ for }m>0,
\end{equation}
where
\[ K_{m}=\frac{\big(\op_{++++}+2m+2\big)\big(\op_{+++-}+2m+1\big)
\big(\op_{+0+0}+m+1\big)(2\mu+2m+1)}
{2\big(\op_{+0+0}+2m+1\big)\big(\op_{+0+0}+2m\big)},\]

\begin{eqnarray*}
     L_{m}&=&\Big[\frac{\big(\op_{++++}+2m+4\big)\big(\op_{+-++}+2m+3\big)}
     {2\big(\op_{+0+0}+2m+3\big)\big(\op_{+0+0}+2m+1\big)}
     +\frac{\big(\op_{+++-}+2m+3\big)\big(\op_{+-+-}+2m+2\big)}
     {2\big(\op_{+0+0}+2m+3\big)\big(\op_{+0+0}+2m+1\big)}\\
             &&\ -\frac{2}{\big(\op_{+0+0}+2m+1\big)}\Big](2\xi+2m+1)m
             -\frac{(2\mu+3)\big(\op_{++++}+2m+4\big)
             \big(\op_{+++-}+2m+3\big)}{2\big(\op_{+0+0}+2m+3\big)}\\
             &&\ +2(2\mu+3)m+h-\xi(\xi+1)k^2+(\xi+1)(1-k^2)+2(\mu+1)(\xi+1)(1-k^2)\\
             &&\ -4k^2m\big(\op_{+0+0}+m+2\big)-\nu(\nu+1)+(\mu+1)^2(1-k^2)+2(\mu+1)(\eta+1)+\mu+\eta+2,
\end{eqnarray*}

\[ M_{m}=\frac{(m+1)(2\xi+2m+3)\big(\op_{+-++}+2m+3\big)\big(\op_{+-+-}+2m+2\big)}
{2\big(\op_{+0+0}+2m+4\big)\big(\op_{+0+0}+2m+3\big)}. \]

\end{definition}


We show that the series always converges for
\[\left|\frac{1-\cn(u,k)}{1+\cn (u,k)}\right|<\min\Big\{\big|k + ik'\big|^2,\ |k - ik'\big|^2\Big\}\]
(with the possible exception of some branch cut) in Section \ref{S:Convergence}.
Now we look at the conditions for the existence of special solutions, that is, the termination of the series. This is the case in which convergence is not an issue.


\begin{theorem}\label{terminate}
If there exists a non-negative integer $q$ such that one of the following cases
\begin{equation} \label{termination}
\op_{++++}=\xi+\eta+\mu+\nu=-2q-4 \mbox{ or }
\op_{+++-}=\xi+\eta+\mu-\nu=-2q-3
\end{equation}
or
\begin{equation} \label{termination0}
\displaystyle \mu=-\frac{2q+3}{2}
\end{equation}
holds,
then there exist $q+1$ values $h_0,\, \cdots,\, h_q$ of $h$ such that
the series expansion $\tilde{Dl}(\xi,\eta,\mu,\nu;h;u,k)$ ($j=0,1,\cdots,q$)
terminates.
\end{theorem}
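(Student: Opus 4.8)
The plan is to extract everything from the three-term recursion (\ref{threeterm}) satisfied by the coefficients $X_m$, reducing the statement to a degree count for a tridiagonal determinant. Observe first that the series (\ref{E:2F1}) terminates exactly when the sequence $(X_m)$ is finitely supported. If $X_N$ is its last nonzero term, then evaluating (\ref{threeterm}) at $m=N+1$ gives $K_{N+1}X_N=0$, so $K_{N+1}=0$; conversely, if $K_{q+1}=0$ and one can arrange $X_{q+1}=0$, then (\ref{threeterm}) at $m=q+1$ reads $M_{q+1}X_{q+2}=0$, and an immediate induction forces $X_m=0$ for all $m>q$. Thus the vanishing of the single subdiagonal entry $K_{q+1}$ is exactly what makes a degree-$q$ truncation self-consistent, and the theorem reduces to showing that each hypothesis forces $K_{q+1}=0$ and then counting the admissible values of $h$.

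The second step is the explicit factorization of $K_{q+1}$. Substituting $m=q+1$ into the formula for $K_m$ gives the numerator
\[
(\op_{++++}+2q+4)\,(\op_{+++-}+2q+3)\,(\op_{+0+0}+q+2)\,(2\mu+2q+3),
\]
so, as long as the denominator $2(\op_{+0+0}+2q+3)(\op_{+0+0}+2q+2)$ is nonzero, $K_{q+1}=0$ if and only if one of these four factors vanishes. The first vanishes under $\op_{++++}=-2q-4$, the second under $\op_{+++-}=-2q-3$, and the fourth under $\mu=-(2q+3)/2$; these are precisely the three cases in (\ref{termination}) and (\ref{termination0}), so each hypothesis yields $K_{q+1}=0$. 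The remaining factor $\op_{+0+0}+q+2=\xi+\mu+q+2$ would give a further situation in which the lower hypergeometric parameter $\op_{+0+0}+2m+3$ meets a nonpositive integer for small $m$; this is degenerate and is not among the three clean sufficient conditions, so I do not use it.

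The third step produces the $q+1$ values of $h$. Assuming $K_{q+1}=0$, I look for a nonzero vector $(X_0,\dots,X_q)$ solving the first $q+1$ relations of the recursion, namely the initial relation $L_0X_0+M_0X_1=0$ together with (\ref{threeterm}) for $1\le m\le q$, in which $X_{q+1}$ has been set to $0$. This is a homogeneous system whose $(q+1)\times(q+1)$ coefficient matrix $T(h)$ is tridiagonal, with diagonal $L_0,\dots,L_q$ and off-diagonal entries the $h$-independent $K_m$ and $M_m$. Reading the formula for $L_m$, its only $h$-dependence is the summand $+h$, so $T(h)$ has diagonal entries of the form $h+c_m$ with $c_m$ independent of $h$; expanding the tridiagonal determinant recursively, the top-degree term comes solely from the diagonal product, so $\det T(h)$ is a monic polynomial in $h$ of degree exactly $q+1$, each off-diagonal contribution lowering the degree by two. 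Hence $\det T(h)=0$ has exactly $q+1$ roots $h_0,\dots,h_q$ in $\C$, counted with multiplicity. For each $h_j$ I take a nonzero kernel vector $(X_0,\dots,X_q)$ and extend it by $X_m=0$ for $m>q$: the relations for $m\le q$ hold by construction, the relation $m=q+1$ collapses to $K_{q+1}X_q=0$, which holds because $K_{q+1}=0$, and all higher relations are $0=0$. So for each $h_j$ the full recursion has a finitely supported nonzero solution, i.e.\ $\tilde{Dl}(\xi,\eta,\mu,\nu;h_j;u,k)$ terminates.

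The part I expect to need the most care is the degeneracy bookkeeping in the second step. One must confirm that the hypotheses do not simultaneously annihilate the denominator $(\op_{+0+0}+2q+3)(\op_{+0+0}+2q+2)$ of $K_{q+1}$; in such an overlap the rational form (\ref{threeterm}) is not directly usable, and one should instead read off termination from the denominator-free recursion (\ref{3term}) for the $C_m$. One must also use the standing assumption $\xi\neq-\tfrac{3}{2},-\tfrac{5}{2},\dots$---which is exactly what keeps the factor $2\xi+2m+3$ of $M_m$ from vanishing---to ensure that $\tilde{Dl}$ is defined throughout. By contrast, the degree count for $\det T(h)$ is robust: it is insensitive to whether individual $K_m$ or $M_m$ vanish, since those affect only the lower-order terms of the determinant.
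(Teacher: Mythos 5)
Your proposal is correct and follows essentially the same strategy as the paper's proof: termination at degree $q$ is reduced to $K_{q+1}=0$ (which each of the three hypotheses forces, via the factors $\op_{++++}+2q+4$, $\op_{+++-}+2q+3$ and $2\mu+2q+3$ in the numerator of $K_{q+1}$) together with the vanishing of the $(q+1)$-st coefficient, viewed as a degree-$(q+1)$ condition on $h$. The only real difference is that the paper encodes that last condition as a finite continued fraction $\tilde{f}=0$ and simply asserts the existence of $q+1$ roots, whereas your tridiagonal-determinant formulation (diagonal entries $h+c_m$, $h$-independent off-diagonals, hence a monic polynomial of degree $q+1$ in $h$) actually justifies the count; your caveats about the denominators of $K_{q+1}$ and the nonvanishing of $M_m$ are legitimate degeneracy issues that the paper also leaves implicit.
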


\begin{proof}
It follows from $(\ref{threeterm})$ that the local Darboux solution becomes the finite series
\begin{eqnarray*}
                 &&(\sn u)^{\xi+1}(\cn u)^{\eta+1}(\dn u)^{\mu+1}
                 \sum_{m=0}^q
                 \Gamma\Bigg(\frac{1}{2}\big(\op_{+-++}+2m+3)\Bigg)\Gamma\Bigg(\frac{1}{2}\big(\op_{+-+-}+2m+2\big)\Bigg)\nonumber\\
                 &&\times \frac{X_m(\sn u)^{2m}}{\Gamma\big(\op_{+0+0}+2m+3\big)}
                 \sideset{_2}{_1}{\operatorname{F}}\left({\begin{matrix}
                 \frac{1}{2}\big(\op_{++++}+4\big)+m, \frac{1}{2}\big(\op_{++-+}+3\big)+m\\
                 \op_{+0+0}+2m+3
                 \end{matrix}};\ \ts\sn^2 u\right), \mbox{ with } X_q\neq0
                 \nonumber\\
\end{eqnarray*}
if and only if
$$K_{q+1}(\xi,\eta,\mu,\nu;k)=0=X_{q+1}(\xi,\eta,\mu,\nu;h;k).$$
Notice that $X_{q+1}(\xi,\eta,\mu,\nu;h;k)=0$ is equivalent to saying the vanishing of the finite continued-fraction
\beqn\label{fcf} \tilde{f}(\xi,\eta,\mu,\nu;h;k):=L_0/M_0-\frac{K_1/M_1}{L_1/M_1-}\frac{K_2/M_2}{L_2/M_2-}\cdots\frac{K_q/M_q}{L_q/M_q}=0.\eeqn

If $\xi,\eta,\mu,\nu$ are chosen such that $K_{q+1}(\xi,\eta,\mu,\nu;k)=0,$ that is,
(\ref{termination}) or (\ref{termination0}) holds, then
there exists $q+1$ values $h_0,\, \cdots,\, h_q$ of $h$ such that (\ref{fcf}) holds,
and hence the series terminates.
\end{proof}
\begin{remark}
If (\ref{termination}) holds, then the solution in Theorem \ref{terminate} becomes the {\it Darboux polynomial},
denoted by $Dp(\xi,\eta,\mu,\nu;h_j;u,k)$ ($j=0,\cdots,q$)
(see the details in \cite[p. 34]{CCT}.). In this case, an {\it invariant subspace} $\mathcal{V}_q$ under the Darboux operator
$\mathcal{D}$ can be constructed as follows: let the Darboux operator $\mathcal{D}$ such that the Darboux equation
(\ref{E:darboux}) can be rewritten as $(\mathcal{D}+h)y=0$ and let $\mathcal{U}_q$ be the space of all the even elliptic functions on
$\mathbb{C}\slash\Lambda$ having exactly one pole at $iK'(k)$ of order at most $2q$.
Note that $\mathcal{U}_q$ can be viewed as the space of polynomials in $\ts\sn^2 u$ with degree at most
$q$. Then it can be verified that the space
\[\mathcal{V}_q:=\{(\sn u)^{\xi+1}(\cn u)^{\eta+1}(\dn u)^{\mu+1}f:\ f\in \mathcal{U}_q\}\]
is invariant under the Darboux operator $\mathcal{D}$, that is, $\mathcal{D}g\in\mathcal{V}_q$ for all $g\in\mathcal{V}_q$. The search of the accessory parameters $h_j$ above becomes a finite dimensional eigenvalue problem.
\end{remark}

\begin{remark}
If (\ref{termination0}) holds, then we obtain a new type of the special solutions which
are finite sums of hypergeometric functions in Theorem \ref{terminate}. In this case, the Darboux equation is indeed the
pull-back of a Heun equation having an {\it apparent singularity} so that it can be reduced
to a hypergeometric equation by the gauge transformation, which is the case considered by Kimura in \cite{Kimura1}.
\end{remark}

\begin{theorem}\label{gauge}
If there exists a non-negative integer $q$ such that $\mu=-\dfrac{2q+3}{2}$, then there exists $q+1$ complex numbers $h_0$,..., $h_q$ so that $\tilde{Dl}(\xi,\eta,\mu,\nu;h_j;u,k)$ has the form of a finite sum. There also exists a first order differential operator $R$ with elliptic coefficients (which is called a gauge transformation) such that
\[
\tilde{Dl}(\xi,\eta,\mu,\nu;h_j;u,k)=R
\sideset{_2}{_1}{\operatorname{F}}\left({\begin{matrix}
                 \frac{1}{2}\big(\op_{++++}+4\big), \frac{1}{2}\big(\op_{+++-}+3\big)\\
                 \op_{+0+0}+3
                 \end{matrix}};\ \ts\sn^2 u\right).
\]
\end{theorem}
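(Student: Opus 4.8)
The plan is to separate the statement into its two assertions. The existence of the $q+1$ accessory values $h_0,\dots,h_q$ for which $\tilde{Dl}(\xi,\eta,\mu,\nu;h_j;u,k)$ collapses to a finite sum is already contained in Theorem \ref{terminate}: the hypothesis $\mu=-\frac{2q+3}{2}$ is precisely condition \eqref{termination0}, so $K_{q+1}=0$, and the vanishing of the terminating continued fraction \eqref{fcf} singles out the $q+1$ admissible values of $h$. Hence only the construction of the first order gauge operator $R$ needs argument, and for this I would pass from the torus to the sphere through the quadratic substitution $t=\sn^2 u$, which turns the classical Darboux equation into a Heun equation with regular singular points at $t=0,1,k^{-2},\infty$ (the images of $u=0,K,K+iK',iK'$), exactly as in the discussion preceding Definition \ref{Series}.

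Because $t-k^{-2}$ vanishes to second order at the ramification point over $u=K+iK'$, the two Heun exponents at $t=k^{-2}$ are $\tfrac{\mu+1}{2}$ and $-\tfrac{\mu}{2}$, whose difference is $\mu+\tfrac12$. The hypothesis $\mu=-\frac{2q+3}{2}$ makes this difference the integer $-(q+1)$, so the singularity at $t=k^{-2}$ has integral exponent difference. The role of the special values $h_j$ is then to guarantee that no logarithmic solution appears there: the termination produced by Theorem \ref{terminate} says exactly that the local solution continues across $t=k^{-2}$ without a logarithm, i.e. that $t=k^{-2}$ is an \emph{apparent} singularity of the Heun equation. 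This is the situation treated by Kimura \cite{Kimura1}.

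Once $t=k^{-2}$ is apparent, the (projective) monodromy of the Heun equation is trivial around that point, hence supported on $\{0,1,\infty\}$ and equivalent to that of a hypergeometric equation. Reading the remaining exponents off the Riemann $P$-scheme and carrying them through $t=\sn^2 u$ (doubling at each ramification point) matches them, after removal of the prefactor $(\sn u)^{\xi+1}(\cn u)^{\eta+1}(\dn u)^{\mu+1}$, with those of the hypergeometric equation having data $\big(\tfrac12(\op_{++++}+4),\tfrac12(\op_{+++-}+3);\op_{+0+0}+3\big)$; one checks directly that the exponent at $u=0$ differs by $2q$, the differences at $u=K$ agree, and at $u=iK'$ the numerator parameters give exponent difference $2\nu+1$, as required. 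Thus the relevant hypergeometric solution is exactly the ${}_2F_1$ on the right-hand side of the assertion. By the equivalence-of-monodromy principle recalled earlier — two second order operators with equivalent monodromy are intertwined by a first order gauge transformation with rational coefficients, cf. the Schlesinger transformations of \cite{JM} — there is a first order operator intertwining the two germ spaces. Pulling it back by $t=\sn^2 u$ converts the rational coefficients into elliptic ones, and restoring the prefactor $(\sn u)^{\xi+1}(\cn u)^{\eta+1}(\dn u)^{\mu+1}$ yields the desired $R$ with $\tilde{Dl}=R\,{}_2F_1(\cdots;\sn^2 u)$.

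The main obstacle is the apparent tension between the \emph{first order} claim and the explicit shape of $\tilde{Dl}$: the terminating expansion is manifestly a sum of $q+1$ contiguous functions ${}_2F_1(A+m,B+m;C+2m;\sn^2 u)$, whereas a first order operator applied to the single ${}_2F_1(A,B;C;\sn^2 u)$ produces only that function together with ${}_2F_1(A+1,B+1;C+1;\sn^2 u)$. The abstract intertwiner forces these $q+1$ terms to collapse into a first order image automatically, so there is no actual contradiction; but making the argument rigorous rests entirely on verifying that $t=k^{-2}$ is genuinely apparent (absence of logarithm) for each $h_j$, which is the delicate analytic point, and on the bookkeeping of the exponent prefactors through the quadratic transformation needed to confirm that the non-prefactor part of $R$ is truly elliptic. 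The latter is routine; the former is the heart of the proof.
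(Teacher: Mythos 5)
Your first paragraph coincides with the paper's first step (quoting Theorem \ref{terminate} for the existence of $h_0,\dots,h_q$), but for the construction of $R$ you take a genuinely different route. The paper's proof is purely constructive and never mentions monodromy: it uses the classical contiguous relations to write each term ${}_2F_1(a+m,b+m;c+2m;\sn^2u)$ of the terminating sum as an explicit differential operator of order $2m$ in $d/d(\sn^2u)$ applied to the base function ${}_2F_1(a,b;c;\sn^2u)$, assembles these into a single operator $L$ of order $2q$ with elliptic coefficients, and then performs Euclidean division $L=QH+R$ by the hypergeometric operator $H$ to drop the order of the intertwiner to one. Your ``apparent tension'' between the first-order claim and the $q+1$ contiguous terms is thus resolved in the paper not by an abstract collapse but by this division step. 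Your route instead descends to the Heun equation, shows $t=k^{-2}$ is an apparent singularity with exponent difference $q+1$, and invokes the monodromy-equivalence principle to produce $R$; this is closer in spirit to the Kimura/Schlesinger remark the paper makes after Theorem \ref{terminate} and explains \emph{why} such an $R$ should exist, whereas the paper's argument actually computes it and needs no hypothesis on the monodromy.

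Two points in your version need tightening before it is a complete proof. First, the principle ``equivalent monodromy representations imply a first-order rational gauge transformation'' is stated in the paper only as background and, as usually proved, requires the representations to be irreducible (otherwise a nonzero intertwiner of local systems need not be invertible, and the reduction of the intertwining operator to order one is exactly the division-algorithm step you would still have to perform). Second, apparentness of $t=k^{-2}$ is not quite automatic from termination: the finite sum is indeed holomorphic there, but a nonzero holomorphic local solution must realize one of the two exponents $0$ or $q+1$, and only if it realizes the smaller exponent $0$ does its existence rule out a logarithm in the second local solution; if the terminating solution happened to vanish to order $q+1$ at $t=k^{-2}$ the log-freeness of the exponent-$0$ solution would remain unproved. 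Neither issue arises in the paper's contiguous-relation computation, which is one reason that approach is preferable here.
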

\begin{proof}
From Theorem \ref{terminate}, the Darboux equation \eqref{E:darboux} has a solution of the form $\tilde{Dl}(\xi,\eta,\mu,\nu;h;u,k)$. There also exist $h_0$,..., $h_q$ such that $\tilde{Dl}(\xi,\eta,\mu,\nu;h_j;u,k)$ is a finite sum of the form
            \begin{eqnarray}
                 &&(\sn u)^{\xi+1}(\cn u)^{\eta+1}(\dn u)^{\mu+1}
                 \sum_{m=0}^q
                 \Gamma\Bigg(\frac{1}{2}\big(\op_{+-++}+2m+3)\Bigg)\Gamma\Bigg(\frac{1}{2}\big(\op_{+-+-}+2m+2\big)\Bigg)\nonumber\\
                 &&\times \frac{X_m(\sn u)^{2m}}{\Gamma\big(\op_{+0+0}+2m+3\big)}
                 \sideset{_2}{_1}{\operatorname{F}}\left({\begin{matrix}
                 \frac{1}{2}\big(\op_{++++}+4\big)+m, \frac{1}{2}\big(\op_{+++-}+3\big)+m\\
                 \op_{+0+0}+2m+3
                 \end{matrix}};\ \ts\sn^2 u\right),
                 \nonumber\\
           \end{eqnarray}
Now by the classical contiguous relations, for each $a$, $b$, $c$ and non-negative integer $m$,
\[
\begin{array}{ll}
 &\sideset{_2}{_1}{\operatorname{F}}(a+m,b+m;c+2m;x)\\
=&\dfrac{c+2m-1}{(a+m-1)(b+m-1)}\dfrac{d}{dx}\sideset{_2}{_1}{\operatorname{F}}(a+m-1,b+m-1;c+2m-1;x)\\
=&\dfrac{c+2m-1}{(a+m-1)(b+m-1)}\dfrac{d}{dx}\\  &\left(\dfrac{c+2(m-1)}{(c-a+(m-1))(c-b+(m-1))}(1-x)\dfrac{d}{dx}-\dfrac{c-a-b}{(c-a+(m-1))(c-b+(m-1))}\right)\\
 &\sideset{_2}{_1}{\operatorname{F}}(a+m-1,b+m-1;c+2(m-1);x)\\
\end{array}
\]
Apply this relation repeatedly to the expansion above, we obtain a differential operator $L$, of order $2q$ in $\dfrac{d}{d(\sn^2 u)}$ and with elliptic coefficients such that
\[
\tilde{Dl}(\xi,\eta,\mu,\nu;h_j;u,k)=L
\sideset{_2}{_1}{\operatorname{F}}\left({\begin{matrix}
                 \frac{1}{2}\big(\op_{++++}+4\big), \frac{1}{2}\big(\op_{+++-}+3\big)\\
                 \op_{+0+0}+3
                 \end{matrix}};\ \ts\sn^2 u\right).
\]
Finally, we let $H$ be the second order differential operator which defines the hypergeometric function above. Applying the division algorithm, we obtain differential operators $Q$ and $R$ so that $R$ is of order one in $\dfrac{d}{d(\sn^2 u)}$, and
\[
L=QH+R
\]
Consequently,
\[
\tilde{Dl}(\xi,\eta,\mu,\nu;h_j;u,k)=R
\sideset{_2}{_1}{\operatorname{F}}\left({\begin{matrix}
                 \frac{1}{2}\big(\op_{++++}+4\big), \frac{1}{2}\big(\op_{+++-}+3\big)\\
                 \op_{+0+0}+3
                 \end{matrix}};\ \ts\sn^2 u\right).
\]
\end{proof}

Now we rephrase the result above for Darboux equations in system form.

\begin{corollary}
Let $A_0$, $A_1$, $A_2$, $A_3\in\mathfrak{s}l_2(\mathbb{C})$ such that $A_0+A_1+A_2+A_3=0$ and an eigenvalue of $A_2$ is a half-integer. Then there exists $h\in\mathbb{C}$ and a gauge transformation $G$ which transforms the Darboux equation
\begin{equation}\label{A}
\dfrac{dY}{dz}-[A_0\dfrac{d\vartheta_1(z)}{\vartheta_1(z)}+A_1\dfrac{d\vartheta_2(z)}{\vartheta_2(z)}
+A_2\dfrac{d\vartheta_3(z)}{\vartheta_3(z)}
+A_3\dfrac{d\vartheta_4(z)}{\vartheta_4(z)}]Y=hY
\end{equation}
to
\begin{equation}\label{B}
\dfrac{dY}{dz}-[B_0\dfrac{d\vartheta_1(z)}{\vartheta_1(z)}+B_1\dfrac{d\vartheta_2(z)}{\vartheta_2(z)}
+B_3\dfrac{d\vartheta_4(z)}{\vartheta_4(z)}]Y=h'Y
\end{equation}
for some $B_0$, $B_1$, $B_3\in\mathfrak{s}l_2(\mathbb{C})$ and $h'\in\mathbb{C}$.
\end{corollary}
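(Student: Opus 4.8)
The plan is to read the statement as the system-form translation of Theorem~\ref{gauge}, carried out in three movements: (a) descend from the system \eqref{A} to the classical scalar Darboux equation \eqref{E:darboux} and convert the hypothesis on $A_2$ into a condition on the scalar parameter $\mu$; (b) apply Theorem~\ref{gauge} to manufacture the scalar first-order gauge; and (c) lift that gauge back to the system level and verify that the transformed connection has exactly the three-pole shape of \eqref{B}.

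For (a), I would invoke the reduction recalled after Definition~\ref{darbouxsystem}: after the trace-free normalization and the simultaneous similarity that places the zeros of the $(1,2)$-entry at the order-two points, the first component $y_1$ of any solution $Y$ of \eqref{A} satisfies \eqref{E:darboux} with parameters $\xi,\eta,\mu,\nu$. Matching singular points, the $\vartheta_3$-zero, namely $\tfrac{1+\tau}{2}$ with residue $A_2$, is the one carrying $\mu$ (it corresponds to $u=K+iK'$, the zero of $\dn$). Comparing the local monodromy $e^{2\pi i A_2}$, with eigenvalues $e^{\pm\pi i a_2}$, to the scalar exponents $\mu+1,-\mu$, with monodromy $e^{\pm 2\pi i\mu}$, gives $a_2\equiv 2\mu \pmod 2$, so that in the natural normalization the eigenvalues of $A_2$ are $\pm\mu$. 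Hence ``an eigenvalue of $A_2$ is a half-integer'' is exactly $\mu\in\tfrac12+\mathbb{Z}$. Since \eqref{E:darboux} depends on $\mu$ only through the invariant $\mu(\mu+1)$, I may replace $\mu$ by $-1-\mu$ and so assume $\mu=-\tfrac{2q+3}{2}$ for some integer $q\ge 0$, which is the hypothesis of Theorem~\ref{gauge}.

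For (b) and (c): Theorem~\ref{gauge} supplies a value $h=h_j$ and a first-order operator $R$ with elliptic coefficients realizing $y_1=R\,F$, where $F$ is the Gauss hypergeometric function in $\sn^2 u$ appearing there. Viewed on the torus through $x=\sn^2 u$, this $F$ is singular only where $x\in\{0,1,\infty\}$, i.e. at the zeros of $\vartheta_1,\vartheta_2,\vartheta_4$ (namely $z=0,\tfrac12,\tfrac\tau2$), and is regular at the $\vartheta_3$-zero $\tfrac{1+\tau}{2}$: the $A_2$-singularity has disappeared. I would therefore write the second-order equation defining $F$ in the system form of Definition~\ref{darbouxsystem} but with only these three poles, producing trace-free residues $B_0,B_1,B_3$ at the zeros of $\vartheta_1,\vartheta_2,\vartheta_4$; this is \eqref{B}. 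Finally $R$ is promoted to a system gauge $G$ by adjoining the companion component $y_2$, which is determined by $y_1$ through the off-diagonal entry of the connection, and by invoking the general first-order correspondence between equations with gauge-related solution spaces recalled in the text. The factor $(\dn u)^{\mu+1}$ hidden inside $R$ supplies the half-integer branching that realizes the $-I$ local monodromy at $\tfrac{1+\tau}{2}$, which is precisely what lets a pole be traded for a branch point and then removed.

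The main obstacle is step (c): making $G$ explicit as an honest, multivalued, theta-twisted $SL_2$-gauge and checking that conjugating the connection of \eqref{A} by $G$ annihilates the $A_2$-pole while keeping the remaining three residues in $\mathfrak{s}l_2(\mathbb{C})$. Concretely one must combine a half-power twist by $\vartheta_3$, which converts the local monodromy from $-I$ to $I$ and shifts the exponents $\pm\mu$ to integers, with a string of elementary Schlesinger transformations lowering those integer exponents to $0,0$; verifying that this composite introduces no spurious poles at the other singular points and preserves tracelessness is the delicate bookkeeping. I would also dispose of the fixed point $\mu=-\tfrac12$ (eigenvalues $\pm\tfrac12$, not of the form $-\tfrac{2q+3}{2}$) directly by this twist-plus-Schlesinger route, since there the system exponent difference is $1$ and the monodromy $-I$ is diagonalizable, so a single theta half-twist followed by one elementary Schlesinger transformation removes the pole.
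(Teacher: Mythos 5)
Your proposal is correct and follows essentially the same route as the paper: identify the half-integer eigenvalue of $A_2$ with the scalar parameter $\mu$ (up to the $\mu\mapsto-1-\mu$ symmetry), invoke Theorem~\ref{gauge} for the generic case, and dispose of the exceptional $\mu=-\tfrac12$ (local exponent difference $1$) by an elementary Schlesinger transformation. You simply make explicit the system-to-scalar dictionary and the lift of the scalar gauge back to the system, which the paper's three-sentence proof leaves implicit.
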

\begin{proof}
Let the eigenvalues of $A_2$ be $\pm\mu$. If $\mu>-\frac{1}{2}$, then Theorem \ref{gauge} assures that there exists a gauge transformation which transforms solutions of equation \eqref{A} to that of equation \eqref{B}. If $\mu=-\frac{1}{2}$, the difference of the local exponents of equation \eqref{A} at $K+iK'$ is $1$. There exists an elementary Schlesinger transformation which transforms its solutions to that of equation \eqref{B}.
\end{proof}



%

\subsection{All other Special Solutions via Symmetries}\label{symmetries}

Using the symmetries of the Darboux Equation studied in \cite{CCT}, we can generate $2\times2\times2\times24=192$ solutions of Darboux equation (\ref{E:darboux}) in the
following form when $\xi,\eta,\mu,\nu\notin\frac{2\Z+1}{2}$:
	\[
		 \tilde{Dl}\big(\sigma_{X_i}(\xi)^{s_\xi},\sigma_{X_i}(\eta)^{s_\eta},\sigma_{X_i}(\mu)^{s_\mu},\sigma_{X_i}(\nu);h_X;\tau_{X_i}(u,k),\kappa_X(k)\big),
	\]
where ${s_\xi},{s_\eta},{s_\mu}=+$ or $-$; $X=I,\, A,\,B,\, C,\, D,\, E$; $i=0,1,2,3$,
see more details in \cite[\S8]{CCT}.

Then we obtain the following

\begin{theorem} \label{terminate1}
Assume one of the following conditions
\eb
\item $\op_{++++}\in2\Z\setminus\{-2\}$ or
\item one of $\op_{+++-}, \op_{++-+}, \op_{+-++},
\op_{-+++}$ is in $(2\Z+1)\setminus\{-1\}$ or
\item one of $\op_{++--}, \op_{+--+}, \op_{+-+-}$
 is in $2\Z\setminus\{0\}$ or
\item one of $\displaystyle \xi,\eta,\mu,\nu$ is in $\frac{2\Z+1}{2}\setminus\{-\frac{1}{2}\}$
\ee
holds. Then there exist finitely many values of $h_X$ such that at least one of the 192 local solutions in the form
$$\tilde{Dl}(\sigma_{X_i}(\xi)^{s_\xi},\sigma_{X_i}(\eta)^{s_\eta},\sigma_{X_i}
  (\mu)^{s_\mu},\sigma_{X_i}(\nu);h_X;\tau_{X_i}(u,k),\kappa_X(k))$$
terminates.
\end{theorem}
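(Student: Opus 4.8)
The plan is to reduce the termination of the $192$ solutions to the three explicit ``seed'' conditions of Theorem \ref{terminate} and then to show, by a purely combinatorial orbit computation, that each hypothesis (i)--(iv) is carried into the seed locus by one of the symmetries. First I would record the mechanism behind Theorem \ref{terminate}: the series $\tilde{Dl}$ for a given quadruple of parameters (and a suitable accessory value $h$) terminates after $q+1$ terms precisely when the subdiagonal coefficient $K_{q+1}$ in the three-term recursion \eqref{threeterm} vanishes, and the numerator of $K_m$ factors so that $K_{q+1}=0$ is equivalent to one of $\op_{++++}=-2q-4$, $\op_{+++-}=-2q-3$, or $\mu=-\tfrac{2q+3}{2}$ (conditions \eqref{termination}, \eqref{termination0}). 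Call these (T1), (T2), (T3).

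Next I would use the symmetry structure from \cite{CCT}. Each of the $192$ listed functions is again a $\tilde{Dl}$-series, but with $(\xi,\eta,\mu,\nu)$ replaced by a signed permutation of itself (a permutation $\sigma_{X_i}$ interchanging the four singular points, together with sign choices $s_\xi,s_\eta,s_\mu$, where the sign flip $\xi\mapsto-\xi-1$ merely swaps the two local exponents) and with $(u,k)$ rescaled. Since termination of such a series is decided by its \emph{own} parameters via (T1)--(T3), the original Darboux equation admits a terminating solution among the $192$ if and only if some symmetry sends $(\xi,\eta,\mu,\nu)$ into the union of the loci (T1), (T2), (T3); the finitely many admissible $h_X$ are then exactly the $q+1$ values furnished by Theorem \ref{terminate} for that image. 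The problem is thereby reduced to the combinatorial claim that the orbit of (T1)$\cup$(T2)$\cup$(T3) contains each of (i)--(iv).

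To verify this I would pass to the eigenvalue coordinates $a_0=2\xi+1,\ a_1=2\eta+1,\ a_2=2\mu+1,\ a_3=2\nu+1$ (as in Theorem \ref{SpSoln}), in which a sign flip becomes $a_j\mapsto-a_j$. Writing $b_j$ for the eigenvalues after a symmetry, the seeds read $\sum_j b_j\in4\Z_{<0}$ (T1), $b_0+b_1+b_2-b_3\in4\Z_{<0}$ (T2), and $b_2\in2\Z_{<0}$ (T3), while the hypotheses read $\sum_j a_j\in4\Z\setminus\{0\}$ (i), one of the four one-minus-sign combinations in $4\Z\setminus\{0\}$ (ii), one of the two-minus-sign combinations in $4\Z\setminus\{0\}$ (iii), and some $a_j\in2\Z\setminus\{0\}$ (iv). The available operations are the permutations together with the sign flips of \emph{any proper subset} of $\{a_0,a_1,a_2,a_3\}$ (the coordinate parked in the $\nu$-slot can never be flipped on its own). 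One then routes each case. For (iv): put the even $a_j$ into the $\mu$-slot and, if it is positive, flip it, landing in (T3) (equivalently \eqref{termination0}; cf. Theorem \ref{gauge}). For (i): if $\sum_j a_j<0$ use (T1) directly, and if $\sum_j a_j>0$ flip $\{a_0,a_1,a_2\}$, so that $b_0+b_1+b_2-b_3=-\sum_j a_j<0$, landing in (T2). For (iii) with combination $c=a_0+a_1-a_2-a_3$: flip the two minus-signed coordinates $\{a_2,a_3\}$ (protecting $a_0$) to obtain $\sum_j b_j=c$, and if $c>0$ flip the complementary pair $\{a_0,a_1\}$ instead to obtain $\sum_j b_j=-c<0$, landing in (T1). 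For (ii) with $c=a_0+a_1+a_2-a_3$: take $c$ directly if $c<0$, otherwise park $a_0$ in the $\nu$-slot and flip $\{a_1,a_2\}$ to get $b_0+b_1+b_2-b_3=-c<0$, landing in (T2). In every case the excluded value ($-2,\,-1,\,0,\,-\tfrac12$ respectively) is precisely the image of the forbidden index $q=-1$, which is why it is deleted from the hypothesis.

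The main obstacle is the bookkeeping forced by the fact that the available sign changes are \emph{not} the full group $(\Z/2)^4$: because the fourth parameter cannot be flipped in isolation, one cannot simply negate the all-plus sum to meet the sign requirement of (T1), and a positive sum in case (i) must instead be routed to (T2). Keeping track of which seed each hypothesis maps to, of the direction needed to force the target integer negative (so that $q\ge0$), and of the exact correspondence between the deleted special values and the boundary index $q=-1$, is where the care lies; each individual verification, however, is a one-line computation with the linear forms $\op_{\pm\pm\pm\pm}$. A final routine point is to check that the standing non-degeneracy hypothesis $\xi\neq-\tfrac32,-\tfrac52,\dots$ of Definition \ref{Series} can be arranged for the chosen symmetry image, so that the terminating series is genuinely non-logarithmic; this is where the half-integer edge cases excluded from the $192$-solution construction are used.
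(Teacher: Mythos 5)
Your proposal is correct and follows essentially the same route as the paper: reduce each hypothesis, via the permutation and sign-flip symmetries of \cite{CCT}, to one of the three seed conditions \eqref{termination}--\eqref{termination0} of Theorem \ref{terminate}. The paper's own proof simply asserts that such a symmetry image exists, whereas you carry out the explicit routing (including the restriction that the parameter in the $\nu$-slot cannot be flipped and the matching of the excluded values with $q=-1$), which is a faithful filling-in of the details rather than a different argument.
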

\begin{proof}
By the assumption, we choose a positive integer $q$ such that  
$$\sigma_{X_i}(\xi)^{s_\xi}+\sigma_{X_i}(\eta)^{s_\eta}+\sigma_{X_i}(\mu)^{s_\mu}+\sigma_{X_i}(\nu)=-2q-4,$$
or $$\sigma_{X_i}(\xi)^{s_\xi}+\sigma_{X_i}(\eta)^{s_\eta}+\sigma_{X_i}(\mu)^{s_\mu}-\sigma_{X_i}(\nu)=-2q-3,$$
or $$\sigma_{X_i}(\mu)^{s_\mu}=-\frac{2q+3}{2}$$
holds. Thus the proof follows from Theorem \ref{terminate}.
\end{proof}

Now we obtain the following result for Darboux equation in system form.

\begin{corollary}
Let $A_0$, $A_1$, $A_2$, $A_3\in\mathfrak{s}l_2(\mathbb{C})$ with eigenvalues $\pm\xi$, $\pm\eta$, $\pm\mu$, $\pm\nu$ respectively such that $A_0+A_1+A_2+A_3=0$. If
\eb
\item $\op_{++++}\in2\Z$ or
\item one of $\op_{+++-}, \op_{++-+}, \op_{+-++},
\op_{-+++}$ is in $(2\Z+1)$ or
\item one of $\op_{++--}, \op_{+--+}, \op_{+-+-}$
 is in $2\Z$,
\ee
then there exists $h\in\mathbb{C}$ such that the monodromy of the Darboux equation
\begin{equation}
\dfrac{dY}{dz}-[A_0\dfrac{d\vartheta_1(z)}{\vartheta_1(z)}+A_1\dfrac{d\vartheta_2(z)}{\vartheta_2(z)}
+A_2\dfrac{d\vartheta_3(z)}{\vartheta_3(z)}
+A_3\dfrac{d\vartheta_4(z)}{\vartheta_4(z)}]Y=hY
\end{equation}
is reducible. Moreover if one of $\xi$, $\eta$, $\mu$ or $\nu$ is in $\frac{2\Z+1}{2}$, there exists $h\in\mathbb{C}$ and a gauge transformation which transforms solutions of the Darboux equation above to that of a Darboux equation with one of the singularities removed.
\end{corollary}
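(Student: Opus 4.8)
The plan is to separate the two assertions. The reducibility statement I would deduce from the existence of a terminating special solution supplied by Theorem~\ref{terminate1}, while the concluding gauge statement I would obtain by transporting Theorem~\ref{gauge} and the elementary Schlesinger transformation along the symmetries of Section~\ref{symmetries}. The bridge between the system form and the scalar form is the elementary lemma preceding Definition~\ref{darbouxsystem}: for a solution $Y$ of $dY=\Omega Y$ the first component $y_1$ solves the classical Darboux equation~\eqref{E:darboux}, and after the normalisation of that section the assignment $Y\mapsto(y_1,y_1')$ is an isomorphism of the two two-dimensional solution spaces which intertwines the system monodromy with the scalar monodromy $\rho:\pi_1(X,x_0)\to GL(V)$. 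Consequently the system monodromy is reducible if and only if $\rho$ admits a one-dimensional invariant subspace. Since the eigenvalues of $A_0,A_1,A_2,A_3$ are $\pm\xi,\pm\eta,\pm\mu,\pm\nu$, the scalar equation has the local exponents displayed in the Riemann $P$-scheme, and the hypotheses (1)--(3) are precisely conditions (i)--(iii) of Theorem~\ref{terminate1} read on the list of $192$ symmetric solutions.

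For the reducibility itself I would argue as follows. Under (1), (2) or (3), Theorem~\ref{terminate1} (equivalently Theorem~\ref{SpSoln}(ii)) guarantees a value of $h$ for which one of the $192$ local solutions terminates; undoing the symmetry that produced it yields a genuine solution of~\eqref{E:darboux} of the form $(\sn u)^{\xi+1}(\cn u)^{\eta+1}(\dn u)^{\mu+1}f$, where $f$ is a single-valued elliptic function on $\C/\Lambda$ with poles only at the order-two points. At each of the four punctures this solution belongs to one of the two local exponents, so, because $f$ is single-valued, continuation around each of the loops $\gamma_0,\gamma_K,\gamma_{K+iK'},\gamma_{iK'}$ merely rescales it by the scalar $e^{2\pi i(\xi+1)},e^{2\pi i(\eta+1)},e^{2\pi i(\mu+1)}$ or the corresponding factor at $iK'$. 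Hence the solution is a simultaneous eigenvector of the whole monodromy group; its span is a one-dimensional invariant subspace, and by the identification above the monodromy of the system is reducible. The boundary values $\op_{++++}=-2$, together with the odd value $-1$ and the even value $0$ omitted in Theorem~\ref{terminate1}, give a degenerate (empty) termination; there the same signed-exponent relation nevertheless places the local system in the reducible locus, and a rank-one subsystem can be exhibited directly after a preliminary Schlesinger shift off the boundary, so reducibility persists.

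For the concluding statement, suppose one of $\xi,\eta,\mu,\nu$ lies in $\frac{2\Z+1}{2}$. Using the permutations of $(\xi,\eta,\mu,\nu)$ together with the reflection $\mu\mapsto-1-\mu$ from Section~\ref{symmetries}, I would first move the half-odd-integer parameter into the $\mu$-slot and normalise it to $\mu\le-\tfrac12$, so that either $\mu=-\tfrac{2q+3}{2}$ for some integer $q\ge0$, or $\mu=-\tfrac12$. In the first case Theorem~\ref{gauge} produces, for a suitable $h$, a first-order operator with elliptic coefficients that sends the terminating solution to a single hypergeometric function; this is a gauge transformation removing the singularity carrying the parameter $\mu$, exactly as in the passage from~\eqref{A} to~\eqref{B}. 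In the second case the residue matrix at that puncture has eigenvalues $\pm\tfrac12$, so the local exponent difference of the system is $1$; since the local monodromy is diagonalisable an elementary Schlesinger transformation removes the singularity. Pulling the resulting equation back through the chosen symmetry gives the asserted gauge transformation for the original equation, with the corresponding singularity removed.

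The main obstacle is organisational rather than a single hard estimate. The two points requiring care are: first, confirming once and for all that the elliptic factor $f$ is genuinely single-valued and that the scalar invariant line thereby produced transfers to the system through $Y\mapsto(y_1,y_1')$, since this is the step that converts \emph{termination} into \emph{reducibility}; and second, keeping track of the symmetries so that a half-odd-integer occupying any of the four slots is correctly normalised into the $\mu$-slot with $\mu\le-\tfrac12$, and in particular separating the generic gauge regime $\mu\le-\tfrac32$, where Theorem~\ref{gauge} applies, from the boundary value $\mu=-\tfrac12$, where Theorem~\ref{gauge} fails and the removal must instead be effected by the exponent-difference-one Schlesinger mechanism.
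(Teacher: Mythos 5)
Your route is the one the paper intends (the corollary is stated there without an explicit proof): under (1)--(3) the symmetries of Section~\ref{symmetries} place one of the $192$ solutions in the termination regime of Theorem~\ref{terminate1}; since there the numerator parameter $\frac{1}{2}(\op_{++++}+4)+m$ (or its $+++-$ analogue) is a non-positive integer for $m\le q$, each $\sideset{_2}{_1}{\operatorname{F}}$ truncates and the terminating solution is $(\sn u)^{\xi+1}(\cn u)^{\eta+1}(\dn u)^{\mu+1}f(\sn^2u)$ with $f$ a polynomial in $\sn^2u$, hence a simultaneous eigenvector of the full monodromy --- including the period loops $\delta_K$, $\delta_{iK'}$, as you correctly note, because every factor is a power of a single-valued elliptic function. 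Its span is the invariant line, and the transfer to the system form via $Y\mapsto(y_1,y_1')$ is exactly the normalisation discussion after Definition~\ref{darbouxsystem}. The ``moreover'' clause is the proof of the preceding corollary transported by a permutation symmetry: Theorem~\ref{gauge} when the half-integer is normalised to $-\frac{2q+3}{2}$, an elementary Schlesinger transformation at exponent difference $1$ when it is $-\frac12$. All of this is consistent with the invariant-subspace remark after Theorem~\ref{terminate} and the monodromy remark following the corollary.

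The one genuine gap is your treatment of the boundary values. Theorem~\ref{terminate1} excludes $\op_{++++}=-2$, the odd value $-1$, the even value $0$ and the half-integer $-\frac12$, whereas the corollary's hypotheses (and its ``moreover'' clause) do not; your patch --- ``a preliminary Schlesinger shift off the boundary'' --- is asserted, not carried out. It is not automatic that the $192$ symmetric images always escape the excluded set: if $\xi+\eta+\mu+\nu=-2$, reflecting two of the parameters changes the sum to $-4-2(\text{sum of those two})$, which is an admissible even integer $\le-4$ only when that partial sum is a non-negative integer, and reflecting all four returns the value $0$, which is again excluded. So either one must check that some signed combination always lands in the admissible range, or one must actually construct the Schlesinger shift (move one local exponent by an integer to enter the termination range, then transport the invariant line back through the rational gauge transformation). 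The paper is equally silent here, so you have not fallen below its standard, but as written your argument establishes the corollary only for the non-boundary part of its stated hypotheses.
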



\begin{remark}
Theorem \ref{terminate1} states that
\begin{itemize}
\item (iv) the degeneration of the projectivised local monodromy of equation \eqref{E:darboux} at a point is a necessary condition for equation \eqref{E:darboux} to have a special solution which is a finite sum of hypergeometric functions. In Theorem \ref{gauge}, this finite sum is expressed as a gauge transformation between a Darboux equation with a local exponent difference $0$ and one with local exponent difference $m\in\mathbb{N}$. Such a gauge transformation is a Schlesinger transformation which was studied in detail in \cite{JM}.
\item (i), (ii) or (iii) are the necessary conditions for the reducibility of the monodromy of equation \eqref{E:darboux}
so that a Darboux polynomial type solution is observed.
To see this, let $\rho:\pi_1(X,x_0)\to GL(2)$ be the monodromy representation of a Darboux equation which has a sub-representation spanned by a function $w$ defined around $x_0$. From this assumption, for each loop $\gamma_j$ ($j\in\{0,K,iK',K+iK'\}$), $\rho(\gamma_j)w$ is a multiple of $w$. So we infer from the Riemann $P$-scheme of the Darboux equation that
\begin{eqnarray*}
\rho(\gamma_0)w=e^{2\pi i(\pm\xi)}w;\\
\rho(\gamma_K)w=e^{2\pi i(\pm\eta)}w;\\
\rho(\gamma_{iK'})w=e^{2\pi i(\pm\nu)}w;\\
\rho(\gamma_{K+iK'})w=e^{2\pi i(\pm\mu)}w,
\end{eqnarray*}
for some choices of $\pm$.
Now we also observe from the topology of the four-punctured torus that
\[
\gamma_0\gamma_{K}\gamma_{iK'}\gamma_{K+iK'}=\delta_K\delta_{iK'}\delta^{-1}_K\delta^{-1}_{iK'}
\]
where $\delta_K$ is the straight edge joining $0$ to $K$. The same for $\delta_{iK'}$. Let both sides act on $w$, we obtain
\[
\exp(2\pi i(\op_{abcd}))w=\rho(\delta_K)\rho(\delta_{iK'})\dfrac{1}{\rho(\delta_K)}\dfrac{1}{\rho(\delta_{iK'})}w=w,
\]
for some $a$, $b$, $c$, $d\in\{\pm\}$.
Thus, we obtain $\op_{abcd}\in\mathbb{Z}$.
\end{itemize}
\end{remark}

%
%

\subsection{Convergence of the Hypergeometric Function Series Expansion} \label{S:Convergence}
In this section, we discuss the convergence when the series (\ref{E:2F1}) is non-terminating. 
\begin{theorem}\label{convergent}
Suppose that $\tilde{Dl}(\xi,\eta,\mu,\nu;h;u,k)$ is non-terminating. 
Then it converges on the domain
\[\left\{u\in\mathbb{C}:\Big|\frac{1-\cn(u,k)}{1+\cn (u,k)}\Big|<\max\Big\{\big|k + ik'\big|^{-2},\ |k - ik'\big|^{-2}\Big\}\right\}\]
if \beqn\label{ifcf} \tilde{g}(\xi,\eta,\mu,\nu;h;k):=L_0/M_0-\frac{K_1/M_1}{L_1/M_1-}\frac{K_2/M_2}{L_2/M_2-}\cdots=0\eeqn
holds. Otherwise, it converges only on the smaller domain
\[\left\{u\in\mathbb{C}:\Big|\frac{1-\cn(u,k)}{1+\cn (u,k)}\Big|<\min\Big\{\big|k + ik'\big|^{-2},\ |k - ik'\big|^{-2}\Big\}\right\}.\]
\end{theorem}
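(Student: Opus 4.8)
The plan is to apply the root test to the series (\ref{E:2F1}) after splitting each summand into its coefficient $X_m$, governed by the recurrence (\ref{threeterm}), and an elementary factor built from the Gamma quotient, the power $(\sn u)^{2m}$ and the Gauss hypergeometric function. First I would determine the exponential growth rate of $X_m$. Inspecting the explicit formulas one finds $K_m\sim M_m\sim m^2$ and $L_m\sim(2-4k^2)m^2$; dividing (\ref{threeterm}) by $M_m$ and letting $m\to\infty$ produces the Poincar\'e--Perron characteristic equation $t^2+(2-4k^2)t+1=0$. Its discriminant is $16k^2(k^2-1)=-16k^2k'^2$, and using $k^2+k'^2=1$ the roots factor as $t_\pm=(k^2-k'^2)\pm 2ikk'=(k\pm ik')^2$, so that $|t_\pm|=|k\pm ik'|^2$, with reciprocal moduli since $(k+ik')(k-ik')=1$.

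By the Poincar\'e--Perron theorem (in the Perron form, valid when the two roots have distinct moduli, i.e.\ for non-real $k$) the recurrence (\ref{threeterm}) has a dominant solution of growth rate $|t_{\max}|:=\max\{|t_+|,|t_-|\}$ and a minimal solution of growth rate $|t_{\min}|=1/|t_{\max}|$. The heart of the argument is the growth of the elementary factor, which I would obtain from the Euler integral representation of the ${}_2F_1$ with parameters growing linearly in $m$ together with Laplace's method. The phase $\phi(t)=\log t+\log(1-t)-\log(1-zt)$, with $z=\sn^2 u$, has its saddle determined by $zt^2-2t+1=0$; since $\sqrt{1-z}=\cn u$ the relevant saddle is $t_*=1/(1+\cn u)$, at which $1-zt_*=\cn u$ and $e^{\phi(t_*)}=t_*(1-t_*)/(1-zt_*)=(1+\cn u)^{-2}$. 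Hence the integral contributes $(1+\cn u)^{-2m}$; the factor $\Gamma(\op_{+0+0}+2m+3)$ it produces cancels the Gamma in the denominator of the summand, while the leftover Gamma quotient is of type $\Gamma(m+\alpha)/\Gamma(m+\beta)\sim m^{\alpha-\beta}$ and is therefore subexponential.

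Combining with $(\sn u)^{2m}$ and using $(\sn u)^{2m}(1+\cn u)^{-2m}=\big((1-\cn u)/(1+\cn u)\big)^m$, the elementary factor grows like $m^{c}\,\big(\tfrac{1-\cn u}{1+\cn u}\big)^m$, so the $m$-th summand behaves like $m^{c}\big(|t|\,\big|\tfrac{1-\cn u}{1+\cn u}\big|\big)^m$ with $|t|=\limsup_m|X_m|^{1/m}$. By the root test the series then converges precisely when $\big|\tfrac{1-\cn u}{1+\cn u}\big|<1/|t|$. To decide which root governs $|t|$ I would invoke Pincherle's theorem: the solution of (\ref{threeterm}) fixed by the prescribed data ($X_0=1$ and $L_0X_0+M_0X_1=0$) is minimal if and only if the associated infinite continued fraction converges, that is, exactly when (\ref{ifcf}) holds. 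Thus when $\tilde g=0$ we have $|t|=|t_{\min}|$ and $1/|t|=\max\{|k+ik'|^{-2},|k-ik'|^{-2}\}$ (the larger domain); otherwise $X_m$ is dominant, $|t|=|t_{\max}|$, and $1/|t|=\min\{|k+ik'|^{-2},|k-ik'|^{-2}\}$ (the smaller domain). When $k$ is real both roots have modulus $1$ and the two domains coincide.

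The step I expect to be the main obstacle is the saddle-point analysis of the elementary factor: one must make the large-$m$ asymptotics of ${}_2F_1(a+m,b+m;c+2m;\sn^2 u)$ uniform in $u$ on compact subsets of the stated domain, verify that the chosen saddle is dominant throughout (and does not collide with the endpoints $t=0,1$ or with the singularity $t=1/z$), and track the branch of $(1+\cn u)^{-2m}$ — this last point being the source of the caveat about a possible branch cut. A secondary technical issue is the borderline case of equal moduli, where the Perron dichotomy degenerates but, as noted, the two bounds coincide so the conclusion is unaffected.
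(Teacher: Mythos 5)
Your proposal follows essentially the same route as the paper's own proof: split each summand into the coefficient $X_m$ and a hypergeometric factor $\varphi_m$, apply the Poincar\'e--Perron dichotomy to the recurrence (\ref{threeterm}) with characteristic roots $(k\pm ik')^2$ (the continued-fraction condition (\ref{ifcf}) selecting the minimal solution, exactly as in Theorems \ref{poin} and \ref{perron}, which your Pincherle formulation reproduces), establish $\varphi_{m+1}/\varphi_m\to(1-\cn u)/(1+\cn u)$, and conclude by the ratio/root test. The only divergence is that the paper imports the hypergeometric asymptotics directly from Watson's expansion (Theorem \ref{asymptotic}) rather than rederiving them by your saddle-point analysis of the Euler integral, which is where you correctly locate the remaining technical work.
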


\begin{proof}
For simplicity, let the series expansion $\tilde{Dl}(\xi,\eta,\mu,\nu;h;u,k)$ be denoted by
\[(\sn u)^{\xi+1}(\cn u)^{\eta+1}(\dn u)^{\mu+1}\sum_{m=0}^\infty X_m \varphi_m(u,k).\]
As
$$
\lim_{m\ra\infty}K_m/m^2=\lim_{m\ra\infty}M_m/m^2=\frac{1}{4k^2} \mbox{ and } \lim_{m\ra\infty}L_m/m^2=\frac{1}{2k^2}-1
$$
it follows from Theorem \ref{poin}
that $\lim_{m\ra\infty}X_{m+1}/X_m$ exists, where the coefficient $X_m$ is defined in \eqref{threeterm}, and is equal to
one of the roots of the quadratic equation
$$t^2-2(2{k}^2-1)t+1=0,$$
that is,
$\lim_{m\ra\infty}X_{m+1}/X_m=(k\pm ik')^2.$
In fact, the limit depends on whether the infinite continued fraction 
$\tilde{g}(\xi,\eta,\mu,\nu;h;k)=0$
holds or not.
By Theorem \ref{perron},
$$
\lim_{m\ra\infty}|X_{m+1}/X_m|=
\begin{cases}
\min\Big\{\big|k + ik'\big|^2,\ |k - ik'\big|^2\Big\}&\mbox{ if (\ref{ifcf}) holds}\\
\max\Big\{\big|k + ik'\big|^2,\ |k - ik'\big|^2\Big\}&\mbox{ otherwise}
\end{cases}.
$$
On the other hand, it follows from Theorem \ref{asymptotic} that
\[
\frac{\varphi_{m+1}(u,k)}{\varphi_m(u,k)}\sim e^{-\zeta} \mbox{ as } m\ra\infty,
\mbox{ where }
\mbox{$\sn^2$}(u,k) = \frac{2}{1-\cosh \zeta}
\]
and hence
\[
\lim_{m\ra\infty}\frac{\varphi_{m+1}(u,k)}{\varphi_m(u,k)}=\frac{1-\cn (u,k)}{1+\cn (u,k)}.
\]
Thus by the ratio-test,
the series expansion converges for
$$\left|\frac{1-\cn (u,k)}{1+\cn (u,k)}\right|<\begin{cases}
\max\Big\{\big|k + ik'\big|^{-2},\ |k - ik'\big|^{-2}\Big\} &\mbox{ if (\ref{ifcf}) holds}\\
\min\Big\{\big|k + ik'\big|^{-2},\ |k - ik'\big|^{-2}\Big\} &\mbox{ otherwise}
\end{cases}.$$
\end{proof}

\begin{definition}
If $h=\hat{h}$ is chosen such that (\ref{ifcf}) holds, then $\tilde{Dl}(\xi,\eta,\mu,\nu;h;u,k)$ converges on the larger domain
\[\left\{u\in\mathbb{C}:\left|\frac{1-\cn (u,k)}{1+\cn (u,k)}\right|<
\max\Big\{\big|k + ik'\big|^{-2},\ |k - ik'\big|^{-2}\Big\}\right\}\]
and in this case we call the solution the {\it Darboux function}, denoted by $\tilde{Df}(\xi,\eta,\mu,\nu;\hat{h};u,k)$.
\end{definition}

\begin{remark}
When the parameters $\xi$, $\eta$, $\mu$ in the $\tilde{Df}$ become zero (or $-1$),
we recover Erd\'elyi's series expansions (see formula (12.1) in \cite{Erdelyi1}).
\end{remark}

Finally, using the symmetries of the Darboux Equation studied in \cite{CCT}, we also have

\begin{theorem} If $h$ is chosen such that $h_X(\xi,\eta,\mu,\nu;h;k)$ satisfies the infinite continued fraction
$$\tilde{g}(\sigma_{X_i}(\xi)^{s_\xi},\sigma_{X_i}(\eta)^{s_\eta},\sigma_{X_i}(\mu)^{s_\mu},\sigma_{X_i}(\nu);h_X;\kappa_X(k))=0,$$ 
then the series $\tilde{Dl}(\sigma_{X_i}(\xi)^{s_\xi},\sigma_{X_i}(\eta)^{s_\eta},\sigma_{X_i}(\mu)^{s_\mu},\sigma_{X_i}(\nu);h_X;\tau_{X_i}(u,k),\kappa_X(k))$
converges on the domain
\[\left\{u\in\mathbb{C}:\left|\frac{1-\cn (u,\kappa_X(k))}{1+\cn (u,\kappa_X(k))}\right|<
\max\Big\{\big|\kappa_X(k) + i\kappa'_X(k)\big|^{-2},\ |\kappa_X(k) - i\kappa'_X(k)\big|^{-2}\Big\}\right\}.\]
Otherwise, it converges only on the smaller domain
\[\left\{u\in\mathbb{C}:\left|\frac{1-\cn (u,\kappa_X(k))}{1+\cn (u,\kappa_X(k))}\right|<
\min\Big\{\big|\kappa_X(k) + i\kappa'_X(k)\big|^{-2},\ |\kappa_X(k) - i\kappa'_X(k)\big|^{-2}\Big\}\right\}.\]
\end{theorem}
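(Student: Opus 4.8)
The plan is to obtain this theorem as an immediate corollary of Theorem \ref{convergent}, by observing that each of the $192$ transformed series is \emph{literally} an instance of the function $\tilde{Dl}$ of Definition \ref{Series}, only with its four parameter slots, its variable slot and its modulus slot filled by the symmetry-transformed data. First I would recall from Section \ref{symmetries} and the symmetry analysis of \cite{CCT} that the symmetry indexed by $(X,i,s_\xi,s_\eta,s_\mu)$ carries the local Darboux solution with data $(\xi,\eta,\mu,\nu;h;u,k)$ to a local Darboux solution whose parameters are $(\sigma_{X_i}(\xi)^{s_\xi},\sigma_{X_i}(\eta)^{s_\eta},\sigma_{X_i}(\mu)^{s_\mu},\sigma_{X_i}(\nu))$, whose independent variable is $\tau_{X_i}(u,k)$ and whose modulus is $\kappa_X(k)$. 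The point is that this transformed object is again of the prefactor-times-hypergeometric form of Definition \ref{Series}, so its coefficients obey exactly the three-term recursion (\ref{threeterm}) but with every parameter and the modulus replaced by their transformed values.

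Granting this, I would then invoke Theorem \ref{convergent} verbatim on the transformed data. Writing $\hat a=\sigma_{X_i}(\xi)^{s_\xi}$, $\hat b=\sigma_{X_i}(\eta)^{s_\eta}$, $\hat c=\sigma_{X_i}(\mu)^{s_\mu}$, $\hat d=\sigma_{X_i}(\nu)$, $\hat v=\tau_{X_i}(u,k)$ and $\hat k=\kappa_X(k)$, Theorem \ref{convergent} asserts that $\tilde{Dl}(\hat a,\hat b,\hat c,\hat d;h_X;\hat v,\hat k)$ converges on the larger (``$\max$'') domain exactly when the infinite continued fraction $\tilde g(\hat a,\hat b,\hat c,\hat d;h_X;\hat k)=0$, i.e. precisely condition (\ref{ifcf}) for the hatted problem, and otherwise only on the smaller (``$\min$'') domain. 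Since the hypothesis of the present theorem is exactly $\tilde g(\hat a,\hat b,\hat c,\hat d;h_X;\kappa_X(k))=0$, the larger-domain conclusion drops out, and the complementary case gives the smaller domain; the complementary modulus $\hat k'=\kappa'_X(k)$ and the roots $(\kappa_X(k)\pm i\kappa'_X(k))^2$ of $t^2-2(2\hat k^2-1)t+1=0$ appear automatically, matching the statement. The only translation required is that the domain delivered by Theorem \ref{convergent} is phrased through $\cn(\hat v,\hat k)=\cn(\tau_{X_i}(u,k),\kappa_X(k))$, so the ``$u$'' appearing inside $\cn$ in the statement is to be read as the transformed variable.

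The hard part will be nothing analytic, but rather the bookkeeping of the first step: one must check that the transformations of \cite{CCT} genuinely send a $\tilde{Dl}$-type series to another $\tilde{Dl}$-type series with the advertised parameters, variable and modulus, rather than to some gauge-equivalent but differently normalized expansion. Concretely I would verify that the prefactor $(\sn)^{\xi+1}(\cn)^{\eta+1}(\dn)^{\mu+1}$ and the hypergeometric summands transform into the corresponding prefactor and summands built from the hatted data, so that the coefficients $X_m$, and hence both the continued fraction $\tilde g$ and the normalized recursion ratios $K_m/m^2,\,L_m/m^2,\,M_m/m^2$, are exactly the transformed ones. Once that identification is secured, the asymptotic input of Theorem \ref{convergent} --- Poincar\'e's and Perron's theorems (Theorems \ref{poin} and \ref{perron}) together with the ratio asymptotics of Theorem \ref{asymptotic} --- applies word for word to the transformed series, and no fresh estimate is needed; the theorem is then immediate.
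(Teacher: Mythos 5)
Your proposal is correct and is exactly the route the paper takes (the paper in fact gives no written proof, treating the result as an immediate application of Theorem \ref{convergent} to the transformed data, since by construction the $192$ symmetry-transformed solutions are literally instances of $\tilde{Dl}$ with the hatted parameters, variable and modulus). The bookkeeping you flag as the ``hard part'' is precisely what is delegated to \cite{CCT}, so nothing further is needed.
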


%
%
%


\appendix
%
%
%


%
%

\section{Asymptotic Expansions of Hypergeometric Functions}

Let $\cosh\zeta=1-2x$.

\begin{theorem}[({\cite[\S 6]{Watson}})] \label{asymptotic}
\begin{eqnarray*} &&\frac{\Gamma(\alpha-\gamma+1+m)\Gamma(\gamma-\beta+m)}
                       {\Gamma(\alpha-\beta+1+2m)}x^m\sideset{_2}{_1}{\operatorname{F}}\left({\begin{matrix}
                  \alpha+m,\, \alpha-\gamma+1+m\\
                  \alpha-\beta+1+2m
                  \end{matrix}};\ x\right)\\
									&\sim&
									 2^{\alpha+\beta}x^{-\alpha}(1-e^{-\zeta})^{1/2-\gamma}(1-e^{-\zeta})^{\gamma-\alpha-\beta-1/2}
									e^{-(\alpha+m)\zeta}\sum_{s=0}^\infty c_s\Gamma(s+1/2)m^{-s-1/2}.
\end{eqnarray*}									
\end{theorem}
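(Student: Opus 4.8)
The plan is to establish the expansion in the classical manner of Watson, by passing to an integral representation and then applying the method of steepest descents (Laplace's method together with Watson's lemma). First I would invoke Euler's integral representation of the hypergeometric function. Writing $a=\alpha+m$, $b=\alpha-\gamma+1+m$, $c=\alpha-\beta+1+2m$, one has $c-b=\gamma-\beta+m$, and
\[
{}_2F_1(a,b;c;x)=\frac{\Gamma(c)}{\Gamma(b)\Gamma(c-b)}\int_0^1 t^{b-1}(1-t)^{c-b-1}(1-xt)^{-a}\,dt .
\]
The whole point of the $\Gamma$-prefactor on the left of the theorem is that it cancels the normalizing constant $\Gamma(c)/(\Gamma(b)\Gamma(c-b))$, so the entire left-hand side collapses to the clean Laplace-type integral
\[
x^m\int_0^1 t^{\alpha-\gamma+m}(1-t)^{\gamma-\beta+m-1}(1-xt)^{-\alpha-m}\,dt=\int_0^1 e^{m f(t)}\,g(t)\,dt ,
\]
where $f(t)=\log\frac{xt(1-t)}{1-xt}$ carries the large parameter and $g(t)=t^{\alpha-\gamma}(1-t)^{\gamma-\beta-1}(1-xt)^{-\alpha}$ is independent of $m$.

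Next I would locate and analyse the saddle. Solving $f'(t)=\frac1t-\frac1{1-t}+\frac{x}{1-xt}=0$ reduces to $xt^2-2t+1=0$, whose relevant root is $t_0=\frac{1-\sqrt{1-x}}{x}=\frac1{1+\sqrt{1-x}}$; one checks that $1-xt_0=\sqrt{1-x}$ and $f''(t_0)\neq0$, so the saddle is nondegenerate. I would deform the contour to the path of steepest descent through $t_0$ and introduce the local variable $\sigma$ by $f(t)-f(t_0)=-\sigma^2$. Because the saddle is quadratic, Watson's lemma applied to the transformed integral produces precisely an expansion in half-integer powers $m^{-s-1/2}$ with coefficients proportional to $\Gamma(s+1/2)$, matching the shape $\sum_{s\ge0}c_s\Gamma(s+1/2)m^{-s-1/2}$ in the statement; the constants $c_s$ are the Taylor coefficients of $g(t(\sigma))\,\frac{dt}{d\sigma}$ at the saddle.

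It remains to identify the leading factors. I would substitute $\cosh\zeta=1-2x$, which turns $\sqrt{1-x}$, $1-xt_0$, and the saddle value $e^{f(t_0)}$ into elementary expressions in $e^{\pm\zeta/2}$. The factor $e^{m f(t_0)}$ then supplies the $e^{-m\zeta}$ part of $e^{-(\alpha+m)\zeta}$, while evaluating the prefactor $g(t_0)$ together with the Gaussian normalization $\sqrt{2\pi/(m|f''(t_0)|)}$ and repackaging everything is intended to reproduce $2^{\alpha+\beta}x^{-\alpha}(1-e^{-\zeta})^{1/2-\gamma}(1-e^{-\zeta})^{\gamma-\alpha-\beta-1/2}$ together with the residual $e^{-\alpha\zeta}$.

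I expect the main obstacle to be the steepest-descent bookkeeping rather than any single estimate: selecting the correct root of $xt^2-2t+1=0$ and the correct branches of the fractional powers of $t$, $1-t$ and $1-xt$, deforming the contour through $t_0$ without crossing the singularities at $t=0,\,1,\,1/x$, and verifying uniformity of the expansion on the asserted region of $x$. In particular, tracking the branches carefully through the substitution $\cosh\zeta=1-2x$, so that the algebraic prefactors and the exponential rate emerge exactly as stated, is the delicate computational heart of the argument; the asymptotic itself is otherwise a routine application of Laplace's method, and for full rigour one may simply appeal to Watson's original treatment in \cite{Watson}.
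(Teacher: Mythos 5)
This theorem is stated in the paper without proof, as a direct citation of Watson's 1918 paper, and your steepest-descent derivation via the Euler integral (the gamma prefactor cancelling the normalization, the saddle at $t_0=1/(1+\sqrt{1-x})$ with $1-xt_0=\sqrt{1-x}$, and Watson's lemma yielding the $\Gamma(s+1/2)\,m^{-s-1/2}$ series) is precisely the argument of the cited source. The steps you make explicit are correct, and the branch/contour bookkeeping you defer to Watson is indeed where the only remaining work lies.
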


\section{Results on Three-Term Recursion Relations}\label{S:three term}

In this section, we review some useful results about three-term recursion relations. We refer to the readers to Gautschi \cite{Gautschi} for more details.

Given the three-term recursion
\[R_rC_{r+1}+S_rC_r+P_rC_{r-1}=0\, (r=0,1,2,\cdots),\]
where $C_{-1}=0$ and $R_r\neq0$ for all $r=0,1,2,\cdots$.
Assume that $\lim_{r\to\infty} P_r:=P,$  $\lim_{r\to\infty} S_r:=S$ and $\lim_{r\to\infty} R_r:=R$ exist.
The limit of ${C_{r+1}}/{C_r}$ ($r\to\infty$) can be determined by Poincar\'e's Theorem and Perron's Theorem.

\begin{theorem}[(Poincar\'e's Theorem (see \cite{Poincare} or {\cite[p.527]{MT}}))]\label{poin}
The limit
\[\lim_{r\to\infty} \frac{C_{r+1}}{C_r}= t_1\ \mbox{ or }\ t_2,\]
where $t_1$ and $t_2$ are the roots of the quadratic equation $Rt^2+St+P$.
\end{theorem}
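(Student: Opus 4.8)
The plan is to reduce the assertion to the dynamics of an asymptotically autonomous first-order nonlinear recursion and then to run a hyperbolic-fixed-point argument. First I would normalize: since $R_r\neq0$ for all $r$ and $R_r\to R$ with $R\neq0$ (for $t_1,t_2$ to be the two roots of a genuine quadratic), divide $R_rC_{r+1}+S_rC_r+P_rC_{r-1}=0$ by $R_r$ to obtain the monic form $C_{r+1}+s_rC_r+p_rC_{r-1}=0$, where $s_r=S_r/R_r\to s:=S/R$ and $p_r=P_r/R_r\to p:=P/R$. The limiting characteristic polynomial $t^2+st+p$ has the same roots $t_1,t_2$ as $Rt^2+St+P$, and in particular $t_1t_2=p$.

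Next, for a solution that is not eventually zero, set $y_r:=C_{r+1}/C_r$. Dividing the recursion by $C_r$ turns it into the first-order iteration $y_r=\phi_r(y_{r-1})$ with $\phi_r(y)=-s_r-p_r/y$, an asymptotically autonomous recursion whose limiting map $\phi(y)=-s-p/y$ is a M\"obius transformation fixing exactly $t_1$ and $t_2$. A direct computation using $t_1t_2=p$ gives $\phi'(t_1)=p/t_1^2=t_2/t_1$ and $\phi'(t_2)=t_1/t_2$. Thus, when the roots have distinct moduli, say $|t_1|>|t_2|$, the fixed point $t_1$ is attracting ($|\phi'(t_1)|<1$) while $t_2$ is repelling ($|\phi'(t_2)|>1$). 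I would then prove a local-attraction lemma: on a small disc $D$ about $t_1$ the maps $\phi_r$ are eventually uniform contractions (because $\phi_r\to\phi$ uniformly on $D$ and $|\phi'(t_1)|<1$), so once $y_r$ enters $D$ it converges to $t_1$; since the only forward orbit of the limiting M\"obius map \emph{not} attracted to $t_1$ is the one pinned at the repelling fixed point $t_2$, this yields $y_r\to t_1$ for generic solutions and $y_r\to t_2$ for the exceptional subdominant family. Translating $y_r=C_{r+1}/C_r$ back gives the claim.

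The hard part will be the step in which one passes from this autonomous dynamical picture to the genuine non-autonomous recursion, and in particular the verification that the orbit actually enters the basin of attraction rather than drifting along with the vanishing perturbations $\phi_r-\phi$. The standard device is a discrete Levinson--Gronwall estimate, most cleanly phrased in the companion-matrix form $\mathbf v_{r+1}=A_r\mathbf v_r$ with $\mathbf v_r=(C_{r-1},C_r)^{T}$ and $A_r\to A=\left(\begin{smallmatrix}0&1\\-p&-s\end{smallmatrix}\right)$, where $A$ is conjugated to $\mathrm{diag}(t_1,t_2)$ so that the system becomes $\mathrm{diag}(t_1,t_2)+E_r$ with $\|E_r\|\to0$; the distinct-moduli hypothesis is exactly what guarantees that the dominant eigendirection prevails over the accumulated perturbation. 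I would stress that this distinct-moduli condition is essential: when $|t_1|=|t_2|$ the limiting map is elliptic or parabolic, neither fixed point is hyperbolic, and $\lim_{r\to\infty}C_{r+1}/C_r$ need not exist. This is precisely the situation in the application (Theorem~\ref{convergent}), where $t_{1,2}=(k\pm ik')^2$ both have modulus $1$, which is why the sharper Perron theorem (Theorem~\ref{perron}) together with the continued-fraction condition \eqref{ifcf} is needed there to decide which root is approached.
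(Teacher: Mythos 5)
The paper does not actually prove this statement: it is quoted verbatim as Poincar\'e's classical theorem with references to \cite{Poincare} and \cite[p.~527]{MT}, so there is no in-paper argument to compare against. Your route --- normalizing to the monic recursion, passing to the Riccati-type ratio $y_r=C_{r+1}/C_r$ governed by the asymptotically autonomous M\"obius maps $\phi_r(y)=-s_r-p_r/y$, computing $\phi'(t_1)=t_2/t_1$, $\phi'(t_2)=t_1/t_2$ to see that the root of larger modulus is the attracting fixed point, and then invoking a discrete Levinson-type estimate in companion-matrix form --- is the standard modern proof of the Poincar\'e--Perron theorem, and the strategy is sound. Two remarks are to your credit: you correctly note that the hypothesis $|t_1|\neq|t_2|$ is indispensable and is silently omitted from the statement as printed (with equal moduli the limit need not exist, e.g.\ constant coefficients with roots $\pm i$), and you correctly observe that $R\neq 0$ is needed for the quadratic to be genuine.

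That said, as a proof the proposal still has a real hole at exactly the place you flag as ``the hard part.'' The local contraction lemma near $t_1$ only helps once the orbit enters a fixed disc about $t_1$, and the dichotomy ``either $y_r\to t_2$ or $y_r$ eventually enters the basin of $t_1$'' is the entire content of the theorem; asserting that the exceptional orbits are ``the subdominant family'' presupposes what must be proved, namely that the recessive solutions form a one-dimensional subspace and that every other solution has dominant-direction growth despite the accumulated perturbations $E_r$ with $\|E_r\|\to 0$ (which need not be summable, so a naive Gronwall bound does not close). Carrying out the diagonalized estimate $\mathbf v_{r+1}=(\mathrm{diag}(t_1,t_2)+E_r)\mathbf v_r$ and showing the component ratio tends to $0$ or $\infty$ is where the work lies and it is only named, not done. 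Two smaller gaps: you divide by $C_r$ without excluding that $C_r$ vanishes infinitely often (this is handled by working with $y_r$ on the Riemann sphere, since $\phi_r(\infty)=-s_r$ is finite, or by Perron's refinement using $P_r\neq0$); and your aside that in Theorem \ref{convergent} the roots $(k\pm ik')^2$ ``both have modulus $1$'' is true only for real $k\in(0,1)$ --- for complex modulus $k$ the two moduli differ, which is evidently the regime the paper's $\min/\max$ formulation is aimed at, though your point that the real-$k$ case falls outside the hypotheses of Poincar\'e--Perron is a fair caution about the application.
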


\begin{theorem}[(Perron's Theorem (see {\cite[\S 57]{Perron}}))]\label{perron}
Suppose that $|t_1|<|t_2|.$ If
the infinite continued fraction
\[S_0/R_0-\frac{P_1/R_1}{S_1/R_1-}\frac{P_2/R_2}{S_2/R_2-}\cdots=0\] holds, then
\[\lim_{r\to\infty} \Bigg|\frac{C_{r+1}}{C_r}\Bigg|= t_1.\]
Otherwise, \[\lim_{r\to\infty} \Bigg|\frac{C_{r+1}}{C_r}\Bigg|= t_2.\]
\end{theorem}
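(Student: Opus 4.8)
The plan is to reduce the statement to the classical dichotomy between \emph{minimal} (recessive) and \emph{dominant} solutions of a three-term recurrence, and to identify the vanishing of the continued fraction with minimality via Pincherle's theorem. Throughout I normalise the distinguished solution by $C_{-1}=0$, $C_0=1$; since every quantity in sight is a ratio, this costs nothing.

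First I would extract the consequences of the spectral gap $|t_1|<|t_2|$. Poincar\'e's Theorem (Theorem \ref{poin}) already forces $\lim_{r\to\infty}C_{r+1}/C_r\in\{t_1,t_2\}$ for every nontrivial solution. The strict inequality promotes this to the Perron dichotomy: the recurrence admits a one-dimensional space of minimal solutions $\{C_r^{\min}\}$ with $C_{r+1}^{\min}/C_r^{\min}\to t_1$, while every solution off this line is dominant, with ratio tending to $t_2$. The whole theorem then amounts to deciding on which side of this dichotomy the distinguished solution falls.

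Next I would connect the continued fraction to the recessive line. Writing $b_r=S_r/R_r$, $a_r=P_r/R_r$, the recurrence reads $C_{r+1}+b_rC_r+a_rC_{r-1}=0$, so the substitution $w_r:=-C_r/C_{r-1}$ linearises into the Riccati relation $a_r=b_rw_r-w_rw_{r+1}$, which is exactly the relation obeyed by the tails
\[ w_r=\cfrac{a_r}{b_r-\cfrac{a_{r+1}}{b_{r+1}-\cdots}} \]
of the continued fraction. Under the gap hypothesis Pincherle's theorem (see Gautschi \cite{Gautschi}) guarantees that these tails converge and that $w_r=-C_r^{\min}/C_{r-1}^{\min}$. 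Since the infinite continued fraction in the statement is $g=b_0-w_1=S_0/R_0-w_1$, the hypothesis $g=0$ reads $w_1=S_0/R_0$, i.e. $C_1^{\min}/C_0^{\min}=-S_0/R_0$. On the other hand the distinguished solution satisfies, from the $r=0$ instance of the recurrence with $C_{-1}=0$, the identity $C_1/C_0=-S_0/R_0$.

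Finally I would close by determinacy: a solution of a second-order recurrence is pinned down by two consecutive values, so when $g=0$ the distinguished solution shares both $C_0$ and $C_1/C_0$ with $\{C_r^{\min}\}$ and hence coincides with it, giving $\lim|C_{r+1}/C_r|=|t_1|$; when $g\neq0$ its first ratio differs from $w_1$, so it leaves the recessive line and is dominant, giving $\lim|C_{r+1}/C_r|=|t_2|$. The step I expect to be the genuine obstacle is the one I am treating as classical input, namely deriving from $|t_1|<|t_2|$ alone both the existence of the recessive direction and the convergence of the continued fraction to it (the Perron--Pincherle kernel); everything else, notably the sign convention $w_r=-C_r/C_{r-1}$ and the index alignment $C_1/C_0=-b_0$, is routine bookkeeping.
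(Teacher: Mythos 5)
The paper offers no proof of this statement: it is quoted as a classical result with a pointer to Perron \cite[\S 57]{Perron} (and to Gautschi \cite{Gautschi} for the surrounding theory of three-term recurrences), so there is no in-paper argument to compare against. Your proposal reconstructs the standard proof correctly. The Riccati identity $a_r=b_rw_r-w_rw_{r+1}$ for $w_r=-C_r/C_{r-1}$ is right, the continued fraction in the statement is indeed $b_0-w_1$, the $r=0$ instance of the recurrence together with $C_{-1}=0$ gives $C_1/C_0=-S_0/R_0$, and the determinacy step (two consecutive values pin down a solution, with $R_r\neq 0$ allowing forward solution) correctly converts ``$g=0$'' into ``the distinguished solution lies on the recessive line'' and ``$g\neq 0$'' into dominance. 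The one genuinely load-bearing ingredient is exactly the one you isolate: that $|t_1|<|t_2|$ produces a one-dimensional space of minimal solutions and that the tails of the continued fraction converge to the minimal-solution ratios (Pincherle), including the degenerate situations $t_1=0$ or $C_r^{\min}=0$ for some $r$, where the continued fraction fails to converge to a finite value and the ``otherwise'' branch applies. Since you defer that kernel to the same classical sources the paper itself cites for the entire theorem, the proposal is sound. One cosmetic point: the paper's statement writes $\lim|C_{r+1}/C_r|=t_1$ (resp.\ $t_2$); as your write-up makes clear, the right-hand sides should be $|t_1|$ and $|t_2|$.
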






%
%


\end{document}